\documentclass[a4paper]{article}

\usepackage{amsmath}
\usepackage{amssymb}
\usepackage{amsthm}
\usepackage{mathtools}
\usepackage{bm}

\numberwithin{equation}{section}

\theoremstyle{plain}
\newtheorem{thm}{Theorem}[section]
\newtheorem{prop}[thm]{Proposition}
\newtheorem{cor}[thm]{Corollary}
\newtheorem{lem}[thm]{Lemma}
\theoremstyle{definition}
\newtheorem{defn}[thm]{Definition}
\newtheorem{rem}[thm]{Remark}

\title{Shapes of light: singularities of slant functions and the Gauss map of a surface\\[4pt]
}
\author{%
  Shuhei\,Honda\thanks{Saitama, Japan.
   \texttt{shuhei.honda.836@gmail.com}}%
  \and
  Yutaro\,Kabata\thanks{%
    Graduate School of Science and Engineering,
    Kagoshima University,
    1-21-35 Korimoto, Kagoshima 890-0065, Japan.
    \texttt{kabata@sci.kagoshima-u.ac.jp}}%
}

\begin{document}

\maketitle

\begin{abstract}
The slant function on a regular surface in three-space, introduced by Koenderink, is the inner product of a fixed light direction with the unit normal; under the standard Lambertian model it gives the brightness of the surface illuminated from that direction. We study the singularities of this function and show that their type is characterised by the differential geometry of the parabolic set and its spherical image under the Gauss map. At every point of the surface, we first describe the light directions along which the slant function has a degenerate singularity; such singularities arise only at the parabolic points. We then determine the precise singularity type at representative parabolic points. At the first-order inflection and first-order vertex of Gauss, and at the cusp of Gauss at which the parabolic set is regular, the finer geometry of the spherical image is reflected in the singularity of the slant function. 
\end{abstract}

\noindent
\textbf{Keywords.}
Slant function; parabolic points; Gauss map; 
singularity theory;
cusp of Gauss; inflection of Gauss; vertex of Gauss.

\medskip
\noindent
\textbf{Mathematics Subject Classification (2020).}
Primary~57R45;
Secondary ~53A05, ~58K05.

\section{Introduction}
\label{sec:intro}

In this paper we study the \emph{slant function} on a regular surface
$\mathcal{M}\subset\mathbb{R}^{3}$, following the terminology of
Koenderink \cite{Koen1990}.
For a fixed unit vector $\mathbf{v}\in S^{2}$, called the \emph{light direction},
the slant function $I_{\mathbf{v}}\colon\mathcal{M}\to\mathbb{R}$ is defined by
\[
    I_{\mathbf{v}}(p) \;=\; \langle \mathbf{v},\,N(p)\rangle,
\]
where $N\colon\mathcal{M}\to S^{2}$ denotes the \emph{Gauss map}.
Although the definition is elementary, the singularities of $I_{\mathbf{v}}$
capture the local geometry of $\mathcal{M}$ and of the Gauss map.
The slant function also has a natural interpretation in computer vision: under the standard Lambertian
reflectance model \cite{Horn1975}, $I_{\mathbf{v}}$ models the
brightness distribution on $\mathcal{M}$ illuminated from direction
$\mathbf{v}$, and its level curves (isophotes) are fundamental
in applications such as shape reconstruction and stylised
rendering \cite{Todo2007}.
Koenderink \cite{Koen1990} introduced the function in this context
and observed that its level curves exhibit qualitatively distinct
patterns depending on the geometry of the surface---in particular,
on the location of the parabolic set---suggesting their significance
for the perception of solid shape.
Koenderink and van Doorn \cite{KvD1993} subsequently analysed the
critical points of the slant function via Hessian analysis, again
highlighting the role of the parabolic set.
However, the non-Morse singularities of the slant function have not
been systematically classified by $A_k$-type.
In the present paper, we provide such a classification and determine
the precise relationship between the singularity type of $I_{\mathbf{v}}$
and the differential geometry of the Gauss map.

It is well known that the parabolic set $\Sigma\subset\mathcal{M}$,
on which the Gauss curvature $K$ vanishes, coincides with the singular
set of $N$, and that the geometry of its spherical image
$\Gamma = N(\Sigma)\subset S^{2}$ reflects finer geometric invariants
of $\mathcal{M}$ beyond the Gauss curvature itself
\cite{ BGM1982,BG1992, BT2019, CG2000, DGH2016, HKS2024, IRFT}.
One of the central points of the present paper is that
the singularity type of $I_{\mathbf{v}}$ detects precisely these invariants.

To state the stratification, at a parabolic point $p$ at
which $\Sigma$ is regular, we let $\gamma$ denote a local arc-length
parametrisation of $\Sigma$ near $p$, and $\kappa$ denote the
geodesic curvature of the spherical curve $N\circ\gamma$ at $p$.

\begin{itemize}
    \item A parabolic point $p$ is called a \emph{fold of Gauss} if
    the restriction of $N$ to $\Sigma$ is regular at $p$;
    equivalently, the height function along $N(p)$ has an
    $A_{2}$-singularity.
    A parabolic point that is not a fold of Gauss is called a
    \emph{degenerate singularity of Gauss}; a classical example
    is the \emph{cusp of Gauss} \cite{IRFT}, at which
    the height function along $N(p)$ has an
    $A_{3}$-singularity.

    \item Among folds of Gauss, $p$ is an \emph{inflection of Gauss}
    if $\kappa(p)=0$, and a \emph{first-order inflection of Gauss}
    if in addition $\kappa'(p)\neq 0$.

    \item Similarly, $p$ is a \emph{vertex of Gauss} if
    $\kappa(p)\neq 0$ and $\kappa'(p)=0$, and a
    \emph{first-order vertex of Gauss} if in addition
    $\kappa''(p)\neq 0$.
\end{itemize}

\begin{rem}
By definition, a cusp of Gauss is a parabolic point at which the
height function along $N(p)$ has an $A_{3}$-singularity.
Note that this condition alone does not imply that the Gauss map
$N$ itself has a cusp singularity in the sense of
$\mathcal{A}$-equivalence \cite{HKS2024, IRFT}.
We use \emph{cusp of Gauss} in its conventional sense.
\end{rem}

The spherical image $\Gamma = N(\Sigma)$ has been studied extensively
from the viewpoint of singularity theory
\cite{BGM1982, BG1992, BT2019, HKS2024, IRFT, PST2024},
and the geometric meaning of the strata above is reflected in $\Gamma$:
the cusp of Gauss corresponds to a singularity of $\Gamma$,
the inflection of Gauss to a geodesic inflection of $\Gamma$,
and the vertex of Gauss to a vertex of $\Gamma$, that is, a critical
point of its geodesic curvature.
The cusp of Gauss has long been a classical object in singularity theory
\cite{BGM1982, BG1992, IRFT}, and the inflection of Gauss has
appeared in connection with certain projection singularities
\cite{BGM1982, HKS2024, IRFT};
see Remark~\ref{rem:literature-parabolic} for further discussion.
The vertex of Gauss, by contrast, is introduced here for the first
time.
Our main results show that these four geometric types are detected
exactly by the $A_{\ge 2}$-directions of the slant function.

\begin{defn}
For $p\in\mathcal{M}$, we call $\mathbf{v}\in S^{2}$ an
\emph{$A_{k}$-direction} at $p$ if $I_{\mathbf{v}}$ has an
$A_{k}$-singularity at $p$.
We call $\mathbf{v}$ an \emph{$A_{\ge 2}$-direction} at $p$ if $I_{\mathbf{v}}$ has
a non-Morse singularity at $p$.
\end{defn}

The existence and types of $A_{\ge 2}$-directions are dictated by
the differential-geometric type of the point on $\mathcal{M}$.
To describe them, for a parabolic point $p\in\mathcal{M}$ we call
the great circle on $S^{2}$ lying in the plane spanned by $N(p)$
and the asymptotic direction at $p$ the \emph{asymptotic normal
section} at $p$;
it coincides with the set of directions
$\mathbf{v}\in S^{2}$ along which $I_{\mathbf{v}}$ has a critical
point at $p$ (a fact established in
Proposition~\ref{prop:critical-directions}).
\begin{thm}
\label{thm:stratification}
Let $p\in\mathcal{M}$.
The $A_{\ge 2}$-directions and $A_{1}$-directions of the slant
function at $p$ are described as follows.
\begin{enumerate}
    \item \textbf{Elliptic or hyperbolic point}:
    No $A_{\ge 2}$-direction exists, and the only $A_{1}$-direction
    is the normal direction $N(p)$ (of type $A_{1}^{+}$).

    \item \textbf{Fold of Gauss}:
    There exist exactly two $A_{\ge 2}$-directions, one of which is
    the normal direction $N(p)$ (of type $A_{3}$).
    The $A_{1}$-directions are precisely the remaining directions
    on the asymptotic normal section at $p$.

    \item \textbf{Degenerate singularity of Gauss with $\Sigma$ regular at $p$}:
    The normal direction $N(p)$ is the unique $A_{\ge 2}$-direction
    (of type $A_{\ge 5}$).
    The $A_{1}$-directions are precisely all remaining directions
    on the asymptotic normal section at $p$.

    \item \textbf{Degenerate singularity of Gauss with $\Sigma$ singular at $p$}:
    Every direction on the asymptotic normal section at $p$ is an
    $A_{\ge 2}$-direction;
    no $A_{1}$-direction exists on the asymptotic normal section at $p$.

    \item \textbf{Flat umbilic point}:
    $I_{\mathbf{v}}$ has a singularity at $p$ for every direction
    $\mathbf{v}\in S^{2}$.
\end{enumerate}
\end{thm}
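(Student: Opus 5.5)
We work throughout in a Monge form at $p$: $\mathcal{M}$ is locally the graph $z=f(x,y)$ with $f(0)=0$ and $df(0)=0$, so $N(p)=(0,0,1)$. Write
\[
f=\tfrac12(\kappa_1x^2+\kappa_2y^2)+\sum_{i+j=3}a_{ij}x^iy^j+\sum_{i+j=4}b_{ij}x^iy^j+\cdots .
\]
The unit normal is $N=(-f_x,-f_y,1)/\sqrt{1+f_x^2+f_y^2}$. For $\mathbf{v}=(v_1,v_2,v_3)$ we get
\[
I_{\mathbf{v}}=\frac{-v_1f_x-v_2f_y+v_3}{\sqrt{1+|\nabla f|^2}} .
\]
Its differential at $0$ is $-v_1\,d f_x-v_2\,d f_y$, i.e.\ $-(v_1,v_2)\,\mathrm{Hess}f(0)$.

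\textbf{Critical directions (Proposition~\ref{prop:critical-directions}).} At an elliptic or hyperbolic point the Hessian is invertible, so the only critical directions are $\mathbf{v}=\pm N(p)$. At a parabolic, non-flat point we take $\kappa_1\neq0=\kappa_2$, so the asymptotic direction is the $y$-axis. Then the critical directions are exactly those with $v_1=0$, which is the asymptotic normal section. At a flat umbilic the Hessian vanishes, so every $\mathbf{v}$ is critical; this is item~5.

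\textbf{Hessians.} For $\mathbf{v}=N(p)$ we have $I=(1+|\nabla f|^2)^{-1/2}=1-\tfrac12|\nabla f|^2+\cdots$. This is essentially $-\tfrac12$ times the squared gradient of the height function along $N(p)$.
\begin{itemize}
\item At an elliptic or hyperbolic point its Hessian is $-\mathrm{Hess}f^2$, which is negative definite. So $N(p)$ gives $A_1$; the sign is $A_1^{+}$ after normalising, as $-N(p)$ gives the opposite sign.
\item At a parabolic point the Hessian is nondegenerate iff $\kappa_1$ and $\kappa_2$ are both nonzero. This gives item~1.
\end{itemize}

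For $\mathbf{v}=(0,\sin\theta,\cos\theta)$ at a parabolic point, the 2-jet of $I_{\mathbf{v}}$ is $-\sin\theta\,j^2 f_y-\tfrac12\cos\theta\,\kappa_1^2x^2+\cdots$. The key observation is that $f_y$ vanishes exactly on $\{K\text{-related curve}\}$: one computes $j^2f_y=a_{21}x^2+2a_{12}xy+3a_{03}y^2$. Recall that $K$ has linear part proportional to $\kappa_1(2a_{12}x+6a_{03}y)$, and that the fold condition is $a_{03}\neq0$ (height function $A_2$). So the quadratic form is
\[
Q_\theta=-\sin\theta\,(a_{21}x^2+2a_{12}xy+3a_{03}y^2)-\tfrac12\kappa_1^2\cos\theta\,x^2 .
\]
Its determinant is linear in $(\cos\theta,\sin\theta)$ after factoring out $\sin\theta$:
\[
\det Q_\theta=\sin\theta\,\bigl(\sin\theta(3a_{03}a_{21}-a_{12}^2)+\tfrac32\kappa_1^2a_{03}\cos\theta\bigr).
\]
We now read off the cases.
\begin{itemize}
\item \textbf{Fold, $a_{03}\neq0$.} The degenerate directions are $\sin\theta=0$ (giving $N(p)$) and one further direction with $\tan\theta=-\tfrac32\kappa_1^2a_{03}/(3a_{03}a_{21}-a_{12}^2)$. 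The antipode $-N(p)$ is also $\sin\theta=0$. We must check that the paper counts $\pm\mathbf{v}$ as equivalent, or else that the statement's ``two'' means these two lines; I would verify this convention against Proposition~\ref{prop:critical-directions}.
\item \textbf{Degenerate Gauss singularity with $\Sigma$ regular, $a_{03}=0$, $a_{12}\neq0$.} The determinant becomes $-a_{12}^2\sin^2\theta$, which vanishes only at $N(p)$. This gives the $A_1$ claims of item~3.
\item \textbf{$\Sigma$ singular, $a_{03}=a_{12}=0$.} The determinant vanishes identically, giving item~4.
\end{itemize}

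\textbf{Determinacy at $N(p)$.} This is the main obstacle: showing that at $N(p)$ the singularity is exactly $A_3$ at a fold and $A_{\ge5}$ otherwise. I would use the formula $I=1-\tfrac12(f_x^2+f_y^2)+O(|\nabla f|^4)$. Along the kernel direction $y$, take the parabolic curve $x=\phi(y)$ solving $f_x=0$. Restricting gives $I\approx1-\tfrac12 f_y(\phi(y),y)^2$. Since the height function has an $A_k$ singularity, $f_y$ along this curve vanishes to order $k$. This makes the restricted $I$ vanish to order $2k$, but the $A$-type is detected by order after splitting off $x$. Hence $A_2$ height gives $y^4$, so $A_3$; and $A_{\ge3}$ height gives order $\ge6$, so $A_{\ge5}$.

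Finally, to ensure the fold's second degenerate direction is genuinely $A_{\ge2}$ (not more), nothing beyond corank~1 is needed for the statement. We only need corank~1, which holds since $Q_\theta$ has a nonzero entry: $\kappa_1\neq0$ forces a nonzero $x^2$ coefficient, or otherwise $a_{03}\neq0$ does.
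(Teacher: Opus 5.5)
Your $2$-jet analysis is the paper's, with the axes swapped and the cubic coefficients taken without factorials. Under $a^{\mathrm{paper}}_{30}=6a_{03}$, $a^{\mathrm{paper}}_{21}=2a_{12}$, $a^{\mathrm{paper}}_{12}=2a_{21}$, your equation $\tfrac32\kappa_1^2a_{03}\cos\theta+(3a_{03}a_{21}-a_{12}^2)\sin\theta=0$ is $\tfrac14$ of \eqref{eq:cosRule}, and your three cases are those of Proposition~\ref{prop:2jet-stratification}. Your worry about antipodes is justified. The paper works modulo $\mathbf v\mapsto-\mathbf v$ (see its treatment of $\varphi=\pi$), since $I_{-\mathbf v}=-I_{\mathbf v}$ has the same $\mathcal{K}$-type. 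So ``exactly two'' means two antipodal pairs. For the same reason both $\pm N(p)$ are $A_1^{+}$, and no normalisation is needed.

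Where you genuinely differ is the type at $N(p)$. The paper expands $I_{N(p)}$ in Taylor coefficients and evaluates Fukui's criteria. It obtains $-a_{30}^2/8$ for the $A_3$ test and asserts, by a computation it does not display, that the $A_4$ test vanishes when $a_{30}=0$. You instead note that $I_{N(p)}-1=\mu\,|\nabla f|^2$ with $\mu$ a unit, so $I_{N(p)}-1\sim_{\mathcal{K}}f_x^2+f_y^2$. You then tie this to the height function through $\{f_x=0\}$, along which $f_y$ is the derivative of the reduced height function. This route is more conceptual and proves more: height type $A_k$ gives slant type $A_{2k-1}$ for every $k$. That explains the paper's remark that only odd indices occur at $N(p)$, and it gives the $A_5$ at a cusp of Gauss without Fukui's $A_5$ formula. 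The paper's values $-a_{30}^2/8$ and $-(a_{02}a_{40}-3a_{21}^2)^2/(72a_{02}^2)$ are exactly $-\tfrac12$ times the square of the leading coefficient of $f_u$ along $\{f_v=0\}$ in its axes.

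The gap is the sentence ``the $A$-type is detected by order after splitting off $x$.'' The splitting lemma reads the type off the critical curve $\{\partial_x(f_x^2+f_y^2)=0\}$, not off $\{f_x=0\}$, and these curves differ. For a general corank-one germ, the order along another curve tangent to the kernel is not the $A_k$-index. For example, $(x-y^2)^2+y^6$ is $A_5$ but has order $4$ along $x=0$. Convexity of $F:=f_x^2+f_y^2$ in $x$ gives one inequality for free, $\min_x F\le F|_{\{f_x=0\}}$, and this suffices for $A_{\ge5}$ in item~3. The exact $A_3$ in item~2 needs the reverse bound, which you have not supplied.

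Here is one way to supply it.
\begin{itemize}
\item Put $X=f_x$, $Y=y$. This is a change of coordinates because $f_{xx}(0)=\kappa_1\ne0$.
\item Then $F=X^2+h^2$ with $h=f_y$, $h(0,Y)=h_0(Y):=f_y(\phi(Y),Y)$ and $h_X(0)=f_{xy}(0)/f_{xx}(0)=0$. Write $h=h_0+Xr$.
\item The critical curve $F_X=0$ is $X=\xi(Y)$ with $\xi(1+rh_X)=-h_0h_X$. So $\xi=h_0\eta$ with $\eta$ smooth and $\eta(0)=0$.
\item Hence
\[
F(\xi(Y),Y)=h_0(Y)^2\bigl(\eta^2+(1+\eta r)^2\bigr)
\]
is $h_0^2$ times a unit, and it has order exactly $2\operatorname{ord}h_0$.
\end{itemize}
With this inserted, your argument is complete.

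Two smaller corrections. First, $\{f_x=0\}$ is the polar curve of the height function, not the parabolic curve. At a fold, $\Sigma$ is transverse to the asymptotic direction, while $\{f_x=0\}$ is tangent to it. Nothing breaks, since you only use $f_x=0$, but the label should change. Second, the unused sentence about $f_y$ vanishing on a ``$K$-related curve'' is not correct as stated and should be dropped.
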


\begin{rem}
The coarser distinction between Morse and non-Morse directions
in the above theorem was established in \cite{KvD1993} via Hessian
analysis. The present paper refines this by classifying each
non-Morse singularity by its precise $A_k$-type, yielding a
finer stratification.
\end{rem}

Our second main theorem completely determines the singularity type
of $I_{\mathbf{v}}$ at each $A_{\ge 2}$-direction, for the four first-order
representative cases of parabolic points.
In each case, the geometric order of the point---expressed through
the order of vanishing of the relevant invariants---fixes the
singularity type uniquely.

\begin{thm}
\label{thm:parabolic}
Let $p$ be a parabolic point of $\mathcal{M}$ that is not a flat umbilic.
\begin{enumerate}
    \item If $p$ is a fold of Gauss but neither an inflection nor
    a vertex of Gauss,
    then the $A_{\ge 2}$-directions are exactly:
    \begin{itemize}
        \item the normal direction\,: type $A_{3}$,
        \item a second $A_{\ge 2}$-direction, other than the asymptotic
              direction\,: type $A_{2}$.
    \end{itemize}

    \item If $p$ is a first-order inflection of Gauss,
    then the $A_{\ge 2}$-directions are exactly:
    \begin{itemize}
        \item the normal direction\,: type $A_{3}$,
        \item the asymptotic direction\,: type $A_{2}$.
    \end{itemize}

    \item If $p$ is a first-order vertex of Gauss,
    then the $A_{\ge 2}$-directions are exactly:
    \begin{itemize}
        \item the normal direction\,: type $A_{3}$,
        \item a second $A_{\ge 2}$-direction\,: type $A_{3}$.
    \end{itemize}

    \item If $p$ is a cusp of Gauss at which $\Sigma$ is regular,
    then the normal direction is the unique $A_{\ge 2}$-direction,
    of type $A_{5}$.
\end{enumerate}
\end{thm}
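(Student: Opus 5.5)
We would derive Theorem~\ref{thm:parabolic} from Theorem~\ref{thm:stratification} together with one observation: $I_{\mathbf{v}}=h_{\mathbf{v}}\circ N$, where $h_{\mathbf{v}}(w)=\langle\mathbf{v},w\rangle$ is the height function on $S^{2}$. Theorem~\ref{thm:stratification} already gives the number of $A_{\ge2}$-directions in each case: two at a fold of Gauss, including $N(p)$, and only $N(p)$ when $\Sigma$ is regular at a degenerate point. By Proposition~\ref{prop:critical-directions}, every candidate direction lies on the asymptotic normal section. What remains is to identify the second direction geometrically and to compute the exact $A_k$-types.

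\emph{Folds of Gauss (items 1--3).} At a fold of Gauss, $N$ is $\mathcal{A}$-equivalent to $(x,y)\mapsto(u,w)=(x,y^{2})$. The source diffeomorphism is harmless for $\mathcal{R}$-type. We fix the target coordinates $(u,w)$ on $S^{2}$ near $N(p)$ so that $\Gamma=\{w=0\}$ and the diffeomorphism is absorbed into $\phi:=h_{\mathbf{v}}$ written in these coordinates. Suppose $\mathbf{v}\neq\pm N(p)$ is a critical direction. Then $\phi_u(0)=0$ and $\phi_w(0)\neq0$, so $\phi(u,w)=c\,w+\psi(u)+w\cdot(\text{h.o.t.})$ with $c\neq0$, where $\psi=\phi|_{\Gamma}$. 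Hence
\[
I_{\mathbf{v}}\sim c\,y^{2}+\psi(x)+O(y^{2}x),
\]
and by the splitting lemma the type is $A_k$ exactly when $\psi$ vanishes to order $k+1$. This order equals the contact order of $\Gamma$ with the level curve of $h_{\mathbf{v}}$ through $N(p)$, which is the small circle on $S^{2}$ centred at $\mathbf{v}$ through $N(p)$. Standard spherical curve theory then gives the following.
\begin{itemize}
\item The contact is $\ge3$ exactly when $\mathbf{v}$ is the spherical centre of curvature of $\Gamma$ at $N(p)$. This identifies the second $A_{\ge2}$-direction.
\item The contact is exactly $3$ (type $A_{2}$) when $\kappa'(p)\neq0$.
\item It is exactly $4$ (type $A_{3}$) when $\kappa'(p)=0\neq\kappa''(p)$, which is item~3.
\item When $\kappa(p)=0$ the osculating circle is a great circle, and its centre is the pole of the plane spanned by $N(p)$ and the tangent to $\Gamma$. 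Since $\Gamma$ is tangent to the image of $dN_p$, and $dN_p$ kills the asymptotic direction, this pole lies in the plane of $N(p)$ and the asymptotic direction and is orthogonal to $N(p)$. It is therefore $\pm$ the asymptotic direction. With $\kappa'(p)\neq0$ the contact is $3$, giving type $A_{2}$ (item~2).
\end{itemize}
For the normal direction, $d h_{N(p)}$ vanishes at $N(p)$, so $\phi=1-\tfrac12(u^{2}+w^{2})+\dots$ in geodesic-type coordinates. Then $I_{N(p)}\sim -\tfrac12(x^{2}+y^{4})$, which is $A_{3}$. This also follows from the Monge form below, via $I_{N(p)}=(1+|\nabla f|^{2})^{-1/2}$.

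\emph{Cusp of Gauss (item 4).} Here $N$ is no longer a fold, so we compute directly. We take the Monge form $f=\tfrac12y^{2}+\tfrac16(a_{0}x^{3}+3a_{1}x^{2}y+3a_{2}xy^{2}+a_{3}y^{3})+(\text{quartic})+(\text{quintic})$. The asymptotic direction is $\partial_x$, and the cusp-of-Gauss condition is $a_{0}=0$ together with the $A_{3}$ condition on $f$ restricted to $\{f_y=0\}$. Since $I_{N(p)}=(1+f_x^{2}+f_y^{2})^{-1/2}$ and $f_y=y+\dots$, the splitting lemma reduces the type to the order of vanishing of $f_x^{2}$ along the curve $\{f_y=0\}$. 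By the $A_3$ condition, $f_x$ vanishes there to order exactly $3$, so $f_x^{2}$ has order $6$. This gives $A_{5}$, once we check that the mixed terms coming from $f_y^{2}$ and the square-root expansion do not lower the order. Uniqueness is item~3 of Theorem~\ref{thm:stratification}.

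\emph{Main obstacle.} The delicate step is making the fold normal form compatible with the metric statements. Specifically, we must verify that the contact order of $h_{\mathbf{v}}|_{\Gamma}$ is computed in the arc-length parameter of $\Gamma$, that $\kappa$ and its derivatives defined along $N\circ\gamma$ agree with those of $\Gamma$ (this needs $N|_{\Sigma}$ regular, true at a fold), and that the $O(y^{2}x)$ cross terms do not affect the $A_k$-type. If this becomes awkward, we would fall back on a direct Monge-form computation with $\mathbf{v}=(\sin\theta,0,\cos\theta)$. That computation finds the degenerate $\theta$ from the Hessian of $(\cos\theta-\sin\theta f_x)(1+|\nabla f|^{2})^{-1/2}$, then reads off the cubic and quartic coefficients along the kernel, expressing $\kappa,\kappa',\kappa''$ through the Monge coefficients up to degree five.
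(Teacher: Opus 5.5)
Your proof is correct. For items 1--3 it takes a genuinely different route from the paper's.

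\textbf{The paper's route.} The paper works entirely in Monge coordinates. It finds $\theta^{*}$ from the Hessian determinant and applies Fukui's criteria (Theorem~\ref{thm:fukui}) along the kernel $(-a_{21},a_{30})$, which gives the polynomial conditions $\alpha_{2}\neq0$ and $\alpha_{3}\neq0$. It then computes $j^{3}\gamma$ and $j^{4}(N\circ\gamma)$ separately, to show that $\kappa(0),\kappa'(0),\kappa''(0)$ are nonzero multiples of $a_{30}a_{12}-a_{21}^{2}$, $\alpha_{2}$ and $\alpha_{3}$. For the cusp it evaluates the $A_{4}$ and $A_{5}$ criteria and quotes the Monge characterisation of the cusp of Gauss from the literature.

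\textbf{Your route.} You use that $N$ is a Whitney fold exactly when $a_{30}\neq0$: $\Sigma$ is regular and $\ker dN_{p}=\langle\partial_{u}\rangle$ is transverse to it. Hence $I_{\mathbf v}=\psi(x)+y^{2}G(x,y^{2})$ with $G(0)\neq0$, and the type is the contact of $\Gamma$ with the small circle centred at $\mathbf v$. Take a unit-speed parametrisation $\sigma$ of $\Gamma$ with Frenet--Sabban frame $(\sigma,\mathbf t,\mathbf n)$, and write $\mathbf v=\cos\theta\,\sigma+\sin\theta\,\mathbf n$. Then $\psi''=\kappa_{g}\sin\theta-\cos\theta$ and $\psi'''=\kappa_{g}'\sin\theta$, and once $\psi''=\psi'''=0$ also $\psi''''=\kappa_{g}''\sin\theta$. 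This is exactly what your bullets assert.

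\textbf{What each buys.} Your route explains, without coordinates, why $\kappa,\kappa',\kappa''$ govern the type. It identifies the second direction as the spherical centre of curvature of $\Gamma$; the paper's formulas give $\cot\theta^{*}=-\kappa(0)$ in its sign convention, consistent with this. It avoids having to verify $\alpha_{2}\propto\kappa'$ and $\alpha_{3}\propto\kappa''$ by long computation, and it extends at once to higher-order vertices. The paper's route gives explicit recognition conditions in the Monge coefficients (Corollary~\ref{cor:geom-alg}) and closed formulas for $\kappa,\kappa',\kappa''$.

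Your item~4 argument also generalises. With $\ell(x)=f(x,y(x))$ one has $f_{x}|_{\{f_{y}=0\}}=\ell'$. So $I_{N(p)}$ is $A_{2k-1}$ whenever the height function along $N(p)$ is $A_{k}$, which explains the paper's odd-index remark. Moreover, $-\tfrac12(\ell')^{2}$ reproduces the paper's values $-a_{30}^{2}/8$ and $-(a_{02}a_{40}-3a_{21}^{2})^{2}/(72a_{02}^{2})$ exactly.

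\textbf{The points you flag are routine.}
\begin{itemize}
\item Orders of vanishing are invariant under regular reparametrisation, and so are the conditions $\kappa=0$, $\kappa'=0$ and $\kappa'=0\neq\kappa''$. Since $N|_{\Sigma}$ is an immersion at a fold, it does not matter whether you use arc length on $\Sigma$ or on $\Gamma$.
\item The cross terms are absorbed by the substitution $\tilde y=y\sqrt{|G(x,y^{2})|}$.
\item In item~4, use coordinates $(x,Y)$ with $Y=f_{y}$. Then $F=f_{x}^{2}+f_{y}^{2}=h(x,Y)^{2}+Y^{2}$ with $h(x,0)=\ell'(x)$, and the critical value in $Y$ satisfies $c\,\ell'(x)^{2}\le F(x,Y^{*}(x))\le \ell'(x)^{2}$ for some $c>0$. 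The real danger is not lowering the order but cancellation raising it, and the sum-of-squares structure excludes that.
\end{itemize}

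\textbf{Two small fixes.} First, you cannot put $N$ in fold normal form and simultaneously use geodesic coordinates for $h_{N(p)}$. You do not need to: a nondegenerate maximum $1-(\alpha u^{2}+2\beta uw+\gamma w^{2})+\cdots$ with $\alpha\gamma>\beta^{2}$ already gives $A_{3}$ after setting $w=y^{2}$. Alternatively, use your Monge argument with $\operatorname{ord}\ell'=2$. Second, in item~1 state explicitly that $\kappa\neq0$ forces $\theta^{*}\neq\pi/2$, so the second direction is not the asymptotic direction.
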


\begin{rem}
The phenomena above show that the slant function $I_{\mathbf{v}}$,
despite its formal resemblance to the height function
$h_{\mathbf{v}}(p) = \langle \mathbf{v}, p\rangle$, behaves
quite differently.
Both are scalar functions on $\mathcal{M}$ defined by an inner product
with a fixed vector $\mathbf{v}$, but they capture different geometry:
$h_{\mathbf{v}}$ measures the height of $p$ along $\mathbf{v}$,
whereas $I_{\mathbf{v}}(p) = \langle\mathbf{v},N(p)\rangle$ measures
the cosine of the angle between $N(p)$ and $\mathbf{v}$.
The singularity theory of height functions on surfaces is a
classical subject \cite{BGM1982,IRFT}.
On a generic surface they admit at most an $A_{3}$-singularity,
and yield a stratification into elliptic ($A_1^+$), hyperbolic ($A_1^-$),
fold-of-Gauss ($A_2$), and cusp-of-Gauss ($A_3$) points.
By contrast, $I_{\mathbf{v}}$ admits a finer stratification
controlled by higher-order invariants of $\Sigma$ and of its
spherical image $\Gamma$, as Theorem~\ref{thm:parabolic} makes precise.
We also remark that, at the normal direction $N(p)$, as the geometric
type of $p$ degenerates from elliptic/hyperbolic through a fold of Gauss
to a cusp of Gauss, only the odd-index singularities $A_{1}$, $A_{3}$,
$A_{5}$ arise; the even-index types $A_{2}$ and $A_{4}$ never occur
in this family.
\end{rem}


In Section~2 we review the necessary background from differential
geometry and singularity theory.
In Section~3 we derive algebraic criteria for the singularity types
of $I_{\mathbf{v}}$ in Monge form.
In Section~4 we translate these criteria into geometric invariants
of the Gauss map and prove the two main theorems.

\section{Preliminaries}
\label{sec:prelim}

Standard references are \cite{BG1992, IRFT, Por2001} for differential
geometry and \cite{BG1992, IRFT} for singularity theory.

\subsection{Surfaces in $\mathbb{R}^{3}$ and the Gauss map}
\label{subsec:prelim-dg}

Let $\mathcal{M}\subset\mathbb{R}^{3}$ be a regular surface, locally
parametrised by a smooth immersion
$g\colon U\to\mathcal{M}$, $(u,v)\mapsto g(u,v)$, with
$U\subset\mathbb{R}^{2}$ open.
The tangent plane $T_{p}\mathcal{M}$ at $p=g(u,v)$ is spanned by
$g_{u}=\partial g/\partial u$ and $g_{v}=\partial g/\partial v$.
The \emph{Gauss map} $N\colon\mathcal{M}\to S^{2}$ sends each $p\in\mathcal{M}$
to its unit normal $N(p)=(g_{u}\times g_{v})/|g_{u}\times g_{v}|$.

The first and second fundamental forms of $\mathcal{M}$ are given as
\begin{align*}
    &E=\langle g_{u},g_{u}\rangle,\;
    F=\langle g_{u},g_{v}\rangle,\;
    G=\langle g_{v},g_{v}\rangle;\\
    &L=\langle g_{uu},\mathbf{n}\rangle,\;
    M=\langle g_{uv},\mathbf{n}\rangle,\;
    N=\langle g_{vv},\mathbf{n}\rangle.
\end{align*}
We follow the convention common in the surface-theoretic literature in
using $N$ for both the Gauss map and the $vv$-coefficient of the
second fundamental form; the meaning is always clear from context.
The \emph{Gauss curvature} of $\mathcal{M}$ is
\[
    K = \frac{LN-M^{2}}{EG-F^{2}},
\]
and the two eigenvalues of the second fundamental form
 are the \emph{principal curvatures} $k_{1}, k_{2}$,
with $K=k_{1}k_{2}$.

A point $p\in\mathcal{M}$ is called \emph{elliptic}, \emph{hyperbolic},
or \emph{parabolic} according to whether $K(p)>0$, $K(p)<0$, or
$K(p)=0$.
A point with $k_{1}(p)=k_{2}(p)$ is \emph{umbilic};
in particular, $k_{1}(p)=k_{2}(p)=0$ defines a \emph{flat umbilic}.
The \emph{parabolic set} of $\mathcal{M}$ is
\[
    \Sigma \;:=\; \{p\in\mathcal{M}\;|\;K(p)=0\}.
\]
At a non-flat parabolic point $p\in\Sigma$, the second fundamental
form has rank $1$, and the unique direction
$\mathbf{w}\in T_{p}\mathcal{M}\setminus\{0\}$
on which the second fundamental form vanishes is the
\emph{asymptotic direction} at $p$.
The set $\Sigma$ coincides with the singular set of the Gauss map
$N$, in the sense that the differential $dN_{p}$ fails to have full
rank precisely at $p\in\Sigma$.
We refer to the image
\[
    \Gamma \;:=\; N(\Sigma)\subset S^{2}
\]
as the \emph{spherical image} of $\Sigma$.

\subsection{Curves on $S^{2}$ and geodesic curvature}
\label{subsec:prelim-curves}

For a smooth curve $g\colon I\to S^{2}$ defined on an interval
$I\subset\mathbb{R}$, the \emph{geodesic curvature} of $g$ at $t\in I$
is
\begin{equation}\label{eq:prelim-geodesic-curvature}
    \kappa(t) \;=\;
    \frac{\langle g''(t),\;g'(t)\times g(t)\rangle}{|g'(t)|^{\,3}},
\end{equation}
where the cross product is taken in $\mathbb{R}^{3}$.
The vector $g'(t)\times g(t)/|g'(t)|$ is the geodesic unit normal of
$g$, lying in the tangent plane to $S^{2}$ at $g(t)$ and perpendicular
to $g'(t)$.
The curve $g$ is a \emph{geodesic} of $S^{2}$ (i.e.\ lies on a great
circle) if and only if $\kappa\equiv 0$.

A point $g(t_{0})$ on a non-geodesic curve $g$ is called an
\emph{inflection of geodesic curvature} (or, briefly, an
\emph{inflection point}) if $\kappa(t_{0})=0$;
it is a \emph{vertex} (of geodesic curvature) if
$\kappa(t_{0})\neq 0$ and $\kappa'(t_{0})=0$.
These notions, applied to the spherical curve $N\circ\gamma$ along
the parabolic set $\Sigma$, motivate the definitions of
\emph{inflection of Gauss} and \emph{vertex of Gauss} given in
Section~\ref{sec:intro}.

\subsection{Singularity types of two-variable functions}
\label{subsec:prelim-sing}

Two function germs
$f_{1}, f_{2}\colon(\mathbb{R}^{2},0)\to(\mathbb{R},0)$ are
\emph{$\mathcal{K}$-equivalent} ($f_{1}\sim_{\mathcal{K}}f_{2}$)
if there exist a diffeomorphism germ
$\varphi\colon(\mathbb{R}^{2},0)\to(\mathbb{R}^{2},0)$ and a
non-vanishing smooth function germ
$M\colon(\mathbb{R}^{2},0)\to\mathbb{R}^{*}$
such that
\[
    M\,f_{1} \;=\; f_{2}\circ\varphi.
\]

Among the simple $\mathcal{K}$-singularities of two-variable functions
classified by Arnol'd  \cite{AGV1985}, the $A_{k}$-series is given,
for $k\ge 1$, by the normal forms
\begin{equation}\label{eq:prelim-Ak-normal-form}
    A_{k}^{\pm}\colon\quad u^{2} \;\pm\; v^{k+1}.
\end{equation}
A function germ $f$ has an \emph{$A_{k}$-singularity}
(or is of type $A_{k}$) at the origin if
$f\sim_{\mathcal{K}}u^{2}+\epsilon\,v^{k+1}$
for some $\epsilon\in\{+1,-1\}$;
the choice of $\epsilon$ refines the type into $A_{k}^{\pm}$ when
relevant.
For $k=1$, the form $u^{2}+v^{2}$ corresponds to a definite Hessian
(Morse extremum), denoted $A_{1}^{+}$, while $u^{2}-v^{2}$ corresponds
to an indefinite Hessian (Morse saddle), denoted $A_{1}^{-}$.
A function germ is called \emph{Morse} if its Hessian at the origin
is non-degenerate, equivalently if it has an $A_{1}$-singularity, and
\emph{non-Morse} otherwise.

We use the notation $A_{\ge 2}$ for any non-Morse singularity.
A function germ has an $A_{\ge 2}$-singularity at the origin if and
only if its Hessian at the origin has rank at most $1$.

\section{Algebraic criteria for the slant function}
\label{sec:criteria}

We derive algebraic conditions for
$I_{\mathbf{v}}$ to have a critical point at $p$ and for the
singularity type to be $A_{\ge 2}$ or $A_k$.

\subsection{Monge form and the slant function}
\label{subsec:setup}

Let $p\in\mathcal{M}$.
The surface $\mathcal{M}$ around $p$ is locally written in \emph{Monge form}
\begin{equation}\label{eq:Monge}
    z \;=\; Q(u,v)
    \;=\; \frac{1}{2}\bigl(k_{1}\,u^{2} + k_{2}\,v^{2}\bigr)
        \;+\; \sum_{k\ge 3}H_{k}(u,v),
    \qquad
    H_{k}(u,v) := \!\sum_{i+j=k}\!\frac{a_{ij}}{i!\,j!}\,u^{i}\,v^{j},
\end{equation}
where $k_{1}$ and $k_{2}$ are the principal curvatures of $\mathcal{M}$
at $p$.
Equivalently, $a_{20}=k_{1}$, $a_{11}=0$, $a_{02}=k_{2}$.
The unit normal vector field is
\begin{equation}\label{eq:N}
    N(u,v) \;=\; \frac{1}{\sqrt{1+Q_{u}^{\,2}+Q_{v}^{\,2}}}
        \bigl(-Q_{u},\,-Q_{v},\,1\bigr),
\end{equation}
so $N(0)=(0,0,1)$.

We parametrise the light direction $\mathbf{v}\in S^{2}$ by spherical
coordinates
\begin{equation}\label{eq:spherical}
    \mathbf{v}_{\theta,\varphi}
    \;=\;\bigl(\sin\theta\cos\varphi,\;\sin\theta\sin\varphi,\;\cos\theta\bigr),
    \qquad\theta\in[0,\pi],\;\varphi\in[0,2\pi),
\end{equation}
so that $\theta=0$ corresponds to $\mathbf{v}=N(p)$, while $\theta=\pi/2$
parametrises the equator $T_{p}\mathcal{M}\cap S^{2}$.
The slant function takes the explicit form
\begin{equation}\label{eq:I-explicit}
    I_{\mathbf{v}}(u,v)
    \;=\;
    \frac{\cos\theta - Q_{u}\sin\theta\cos\varphi
                    - Q_{v}\sin\theta\sin\varphi}
         {\sqrt{1+Q_{u}^{\,2}+Q_{v}^{\,2}}}.
\end{equation}

We have the following rough stratification according to $(k_{1},k_{2})$:
$p$ is elliptic ($k_{1}k_{2}>0$), hyperbolic ($k_{1}k_{2}<0$), non-flat
parabolic ($k_{1}k_{2}=0$ but $(k_{1},k_{2})\neq(0,0)$), or a flat
umbilic ($k_{1}=k_{2}=0$).
At a non-flat parabolic point, we may further assume
\begin{equation}\label{eq:para-conv}
    k_{1}=0,\qquad k_{2}\neq 0,
\end{equation}
in which case the asymptotic direction at $p$ is the $u$-axis and the
asymptotic normal section at $p$ (introduced in
Section~\ref{sec:intro}) is the great circle
\(
    \{(\sin\theta,\,0,\,\cos\theta)\,:\,\theta\in[0,2\pi)\}\subset S^{2},
\)
i.e.\ the locus $\{\sin\varphi=0\}$ in the parametrisation
\eqref{eq:spherical}.

\subsection{First-jet analysis: critical directions}
\label{subsec:1jet}

A direct computation from~\eqref{eq:I-explicit} yields the linear part
of $I_{\mathbf{v}}$ at the origin.

\begin{lem}\label{lem:1jet}
The $1$-jet of $I_{\mathbf{v}}$ at the origin is
\[
    j^{1}I_{\mathbf{v}}(0)(u,v)
    \;=\;
    \cos\theta
    -\sin\theta\bigl(k_{1}\cos\varphi\,\cdot u
                  +k_{2}\sin\varphi\,\cdot v\bigr).
\]
In particular,
\[
    \nabla I_{\mathbf{v}}(0)
    \;=\;
    -\sin\theta\,\bigl(k_{1}\cos\varphi,\;k_{2}\sin\varphi\bigr).
\]
\end{lem}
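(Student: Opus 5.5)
The plan is a direct first-order Taylor expansion of the explicit formula \eqref{eq:I-explicit} at the origin. In Monge form \eqref{eq:Monge} the function $Q$ has no constant or linear terms, and its quadratic part is $\tfrac12(k_1u^2+k_2v^2)$, because $a_{11}=0$. We therefore start from the expansions of $Q_u$ and $Q_v$:
\[
Q_u = k_1 u + O(2),\qquad Q_v = k_2 v + O(2),
\]
where $O(2)$ denotes terms of order at least two in $(u,v)$. The cubic and higher terms $H_k$ with $k\ge 3$ contribute to $Q_u$ and $Q_v$ only at order at least two, so they play no role at this stage.

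Next we treat the denominator. Since $Q_u^2+Q_v^2=O(2)$, we have
\[
\bigl(1+Q_u^2+Q_v^2\bigr)^{-1/2}=1+O(2).
\]
The numerator is
\[
\cos\theta-\sin\theta\bigl(\cos\varphi\,Q_u+\sin\varphi\,Q_v\bigr)
=\cos\theta-\sin\theta\bigl(k_1\cos\varphi\,u+k_2\sin\varphi\,v\bigr)+O(2).
\]
Multiplying the numerator by $1+O(2)$ changes only terms of order at least two. Hence the $1$-jet of $I_{\mathbf{v}}$ at the origin is exactly the stated affine expression. The constant term is $\cos\theta$, which agrees with $I_{\mathbf{v}}(0)=\langle\mathbf{v},N(0)\rangle=\cos\theta$ since $N(0)=(0,0,1)$. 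The gradient formula is then read off from the coefficients of $u$ and $v$.

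There is no genuine obstacle here. The only point that needs care is that the normalising factor $1+Q_u^2+Q_v^2$ is not trivial, but it differs from $1$ only at second order because $\nabla Q(0)=0$. That is the one fact to state explicitly, and it follows directly from the Monge form. Everything else is bookkeeping of orders of vanishing.
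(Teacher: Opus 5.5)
Your proposal is correct and follows the same route as the paper, which simply states that the formula is "a direct computation from \eqref{eq:I-explicit}". Your argument supplies that computation: since $\nabla Q(0)=0$ and $a_{11}=0$, the normalising factor is $1+O(2)$ and $Q_u=k_1u+O(2)$, $Q_v=k_2v+O(2)$, which gives the stated $1$-jet.
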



We say $\mathbf{v}\in S^{2}$ is a \emph{critical direction} at $p$ if
$\nabla I_{\mathbf{v}}(0)=0$, i.e.\ $I_{\mathbf{v}}$ has a critical
point at $p$.
We write $\mathcal{C}_{p}$ for the set of all critical directions at $p$.

\begin{prop}\label{prop:critical-directions}
\begin{enumerate}
    \item If $p$ is elliptic or hyperbolic, then $\mathcal{C}_{p}=\{\pm N(p)\}$.
    \item If $p$ is non-flat parabolic, then $\mathcal{C}_{p}$ is the
          asymptotic normal section at $p$.
    \item If $p$ is a flat umbilic, then $\mathcal{C}_{p}=S^{2}$.
\end{enumerate}
\end{prop}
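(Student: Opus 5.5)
The plan is to read everything off Lemma~\ref{lem:1jet}. A direction $\mathbf{v}_{\theta,\varphi}$ is critical exactly when
\[
    \sin\theta\,k_{1}\cos\varphi=0
    \qquad\text{and}\qquad
    \sin\theta\,k_{2}\sin\varphi=0 .
\]
So I will solve this pair of equations in each of the three regimes of $(k_{1},k_{2})$ from Section~\ref{subsec:setup}. At the end I will check that the answer does not depend on the Monge frame: the conditions are written in terms of $N(p)$, the tangent plane and the principal or asymptotic directions, all of which are intrinsic.

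\emph{Case (1): elliptic or hyperbolic.} Here $k_{1}k_{2}\neq 0$, so $k_{1}\neq0$ and $k_{2}\neq0$. If $\sin\theta\neq 0$, we would need $\cos\varphi=\sin\varphi=0$, which is impossible. Hence $\sin\theta=0$, i.e.\ $\theta\in\{0,\pi\}$, which gives $\mathbf{v}=(0,0,\pm1)=\pm N(p)$. Conversely, both of these directions make the gradient vanish.

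\emph{Case (2): non-flat parabolic.} Take the normalisation \eqref{eq:para-conv}, $k_{1}=0$ and $k_{2}\neq0$. The first equation then holds automatically, and the second becomes $\sin\theta\sin\varphi=0$. In Cartesian terms this says that the second coordinate of $\mathbf{v}$ vanishes, so
\[
    \mathcal{C}_{p}=\{\mathbf{v}\in S^{2} : \mathbf{v}\perp\partial_v\}.
\]
This is the great circle spanned by $N(p)=(0,0,1)$ and the asymptotic direction $(1,0,0)$, i.e.\ the asymptotic normal section as described after \eqref{eq:para-conv}. Working with the Cartesian form avoids the degeneracy of spherical coordinates at the poles $\theta\in\{0,\pi\}$.

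\emph{Case (3): flat umbilic.} Here $k_{1}=k_{2}=0$, so the gradient vanishes for every $(\theta,\varphi)$, and $\mathcal{C}_{p}=S^{2}$.

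There is no real obstacle. The only point needing care is that Lemma~\ref{lem:1jet} is stated in spherical coordinates, which are singular at $\theta=0,\pi$. To handle this I will restate the gradient in Cartesian form, $\nabla I_{\mathbf{v}}(0)=-(k_{1}v_{1},\,k_{2}v_{2})$ for $\mathbf{v}=(v_{1},v_{2},v_{3})$. This follows directly from \eqref{eq:I-explicit}, because $Q_u=k_{1}u+O(2)$, $Q_v=k_{2}v+O(2)$, and the square-root factor is $1+O(2)$. With this form, all three cases reduce at once to the linear conditions $k_{1}v_{1}=k_{2}v_{2}=0$ on the unit sphere.
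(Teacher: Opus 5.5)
Your proposal is correct and follows essentially the same route as the paper: both read the critical-direction condition off Lemma~\ref{lem:1jet} and solve it separately in the elliptic/hyperbolic, non-flat parabolic and flat umbilic cases. Your Cartesian restatement $\nabla I_{\mathbf{v}}(0)=-(k_{1}v_{1},\,k_{2}v_{2})$ is simply a tidier way to handle the coordinate degeneracy at $\theta\in\{0,\pi\}$, which the paper handles instead by observing that the poles already lie on the great circle $\{\sin\varphi=0\}$.
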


\begin{proof}
By Lemma~\ref{lem:1jet}, $\mathbf{v}_{\theta,\varphi}\in\mathcal{C}_{p}$
if and only if $\sin\theta=0$ or $(k_{1}\cos\varphi,\,k_{2}\sin\varphi)=(0,0)$.
In case~1, $k_{1}k_{2}\neq 0$ forces $\sin\theta=0$, giving $\mathbf{v}=\pm N(p)$.
In case~2, the condition reduces to $\sin\theta=0$ or $\sin\varphi=0$.
As a subset of $S^{2}$, this union is precisely the great circle
$\{\sin\varphi=0\}$, since the pole $\sin\theta=0$ is already contained
in it; this is the asymptotic normal section.
Case~3 is immediate.
\end{proof}

\begin{rem}\label{rem:critical-includes-normal}
The normal direction $N(p)$ corresponds to $\theta=0$ in
\eqref{eq:spherical}, hence $\sin\theta=0$, and is therefore always a
critical direction --- regardless of whether $p$ is parabolic.
At elliptic and hyperbolic points it is the only one;
at a non-flat parabolic point it is one of an entire great circle of
critical directions.
\end{rem}

\subsection{Second-jet analysis: the Hessian and $A_{\ge 2}$-directions}
\label{subsec:2jet}

We next compute the $2$-jet of $I_{\mathbf{v}}$ at the origin and use
the Hessian to determine the $A_{\ge 2}$-directions.

\begin{lem}\label{lem:2jet}
The $2$-jet of $I_{\mathbf{v}}$ at the origin is
\begin{equation}\label{eq:2jet}
    j^{2}I_{\mathbf{v}}(0)(u,v)
    \;=\;
    \cos\theta
    \;-\;\sin\theta\bigl(k_{1}\cos\varphi\,\cdot u
                       +k_{2}\sin\varphi\,\cdot v\bigr)
    \;+\;\frac{1}{2}\bigl(A\,u^{2}+2B\,uv+C\,v^{2}\bigr),
\end{equation}
where
\begin{align}
    A &= -k_{1}^{\,2}\cos\theta
         -\sin\theta\,(a_{30}\cos\varphi + a_{21}\sin\varphi),
         \label{eq:A}\\
    B &= -\sin\theta\,(a_{21}\cos\varphi + a_{12}\sin\varphi),
         \label{eq:B}\\
    C &= -k_{2}^{\,2}\cos\theta
         -\sin\theta\,(a_{12}\cos\varphi + a_{03}\sin\varphi).
         \label{eq:C}
\end{align}
In particular,
\(
\mathrm{Hess}\,I_{\mathbf{v}}(0)
=\Bigl(\begin{smallmatrix} A & B \\ B & C \end{smallmatrix}\Bigr).
\)
\end{lem}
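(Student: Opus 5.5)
The plan is to Taylor-expand the explicit formula \eqref{eq:I-explicit} up to order two in $(u,v)$. The constant and linear terms are already given by Lemma~\ref{lem:1jet}, so only the quadratic terms need new work. Write $I_{\mathbf{v}}=P\cdot W$ with
\[
P=\cos\theta-\sin\theta\,(Q_u\cos\varphi+Q_v\sin\varphi),\qquad
W=(1+Q_u^{2}+Q_v^{2})^{-1/2}.
\]
From the Monge form \eqref{eq:Monge}, $Q_u=k_1u+\tfrac12(a_{30}u^2+2a_{21}uv+a_{12}v^2)+O(3)$ and $Q_v=k_2v+\tfrac12(a_{21}u^2+2a_{12}uv+a_{03}v^2)+O(3)$. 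The only contributions to $Q_u,Q_v$ up to order two come from the quadratic part $\tfrac12(k_1u^2+k_2v^2)$ and the cubic part $H_3$; this is where $a_{11}=0$ is used. Higher $H_k$ with $k\ge4$ only affect order $\ge3$.

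First I expand the normalising factor. Since $Q_u,Q_v$ have no constant term, $Q_u^2+Q_v^2=k_1^2u^2+k_2^2v^2+O(3)$, and therefore
\[
W=1-\tfrac12\bigl(k_1^2u^2+k_2^2v^2\bigr)+O(3).
\]
In particular $W$ has no linear part.

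Next I multiply out $P\cdot W$ and collect the degree-two terms. There are two sources. The first is $\cos\theta$ times the quadratic part of $W$, which gives $-\tfrac12\cos\theta\,(k_1^2u^2+k_2^2v^2)$. The second is $-\sin\theta$ times the quadratic parts of $Q_u\cos\varphi+Q_v\sin\varphi$. The cross term between the linear part of $P$ and $W$ vanishes, because $W$ has no linear part.

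Reading off coefficients then gives the stated formulas:
\[
\tfrac12 A=-\tfrac12 k_1^2\cos\theta-\tfrac12\sin\theta\,(a_{30}\cos\varphi+a_{21}\sin\varphi),\qquad
B=-\sin\theta\,(a_{21}\cos\varphi+a_{12}\sin\varphi),
\]
with $C$ obtained symmetrically from $v^2$. The Hessian statement is then the standard identification of the quadratic coefficients of the 2-jet, since the Hessian entries are the second derivatives at $0$.

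I expect no real obstacle. The only care needed is the bookkeeping of the factor $\tfrac12$ and the binomial coefficients from $a_{ij}/(i!\,j!)$ when differentiating $H_3$: the $u^2v$ coefficient is $a_{21}/2$, so its $u$-derivative contributes $a_{21}uv$ and its $v$-derivative contributes $\tfrac12 a_{21}u^2$. It is also worth confirming that no $k_1k_2\,uv$ term arises in $W$, which holds because $Q_uQ_v$ does not appear in $W$.
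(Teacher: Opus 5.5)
Your proposal is correct and is essentially the same route as the paper: a direct Taylor expansion of the explicit quotient formula for $I_{\mathbf{v}}$. The paper states this lemma without writing the expansion out. Your expressions for $Q_u$ and $Q_v$, the expansion of the normalising factor as $1-\tfrac12(k_1^2u^2+k_2^2v^2)+O(3)$, and the resulting coefficients $A$, $B$, $C$ are all correct.
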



\begin{lem}\label{lem:detH}
\begin{align}
    \det\mathrm{Hess}\,I_{\mathbf{v}}(0)
    \;=\;& k_{1}^{\,2}k_{2}^{\,2}\,\cos^{2}\theta\nonumber\\
        &\;+\sin\theta\cos\theta\,
              \Bigl[(a_{12}k_{1}^{\,2}+a_{30}k_{2}^{\,2})\cos\varphi
                   +(a_{03}k_{1}^{\,2}+a_{21}k_{2}^{\,2})\sin\varphi\Bigr]
              \nonumber\\
        &\;+\sin^{2}\theta\,\bigl[
              (a_{30}a_{12}-a_{21}^{\,2})\cos^{2}\varphi
              +(a_{30}a_{03}-a_{21}a_{12})\cos\varphi\sin\varphi
              \nonumber\\
        &\hphantom{\;+\sin^{2}\theta\,\bigl[}
              +(a_{21}a_{03}-a_{12}^{\,2})\sin^{2}\varphi\bigr].
              \label{eq:detH}
\end{align}
\end{lem}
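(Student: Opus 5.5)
The determinant formula is a direct consequence of Lemma~\ref{lem:2jet}: since $\mathrm{Hess}\,I_{\mathbf{v}}(0)=\bigl(\begin{smallmatrix} A & B\\ B & C\end{smallmatrix}\bigr)$, we only have to expand $AC-B^{2}$ using \eqref{eq:A}--\eqref{eq:C} and sort the terms by their degree in $(\cos\theta,\sin\theta)$. To keep the bookkeeping light, set $c=\cos\varphi$, $s=\sin\varphi$, and introduce the linear forms
\[
\alpha=a_{30}c+a_{21}s,\qquad \beta=a_{21}c+a_{12}s,\qquad \gamma=a_{12}c+a_{03}s.
\]
With this notation $A=-k_{1}^{2}\cos\theta-\alpha\sin\theta$, $B=-\beta\sin\theta$ and $C=-k_{2}^{2}\cos\theta-\gamma\sin\theta$.

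Multiplying out, we get
\[
AC-B^{2}=k_{1}^{2}k_{2}^{2}\cos^{2}\theta+\sin\theta\cos\theta\,(k_{1}^{2}\gamma+k_{2}^{2}\alpha)+\sin^{2}\theta\,(\alpha\gamma-\beta^{2}).
\]
The first term already matches the first line of \eqref{eq:detH}. For the mixed term, substitute the definitions of $\gamma$ and $\alpha$ and collect coefficients of $c$ and $s$. This gives $(a_{12}k_{1}^{2}+a_{30}k_{2}^{2})\cos\varphi+(a_{03}k_{1}^{2}+a_{21}k_{2}^{2})\sin\varphi$, which is the bracket in the second line.

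The last term is the only place where any care is needed, and it is simply the expansion of $\alpha\gamma-\beta^{2}$ as a quadratic form in $(c,s)$. We have $\alpha\gamma=a_{30}a_{12}c^{2}+(a_{30}a_{03}+a_{21}a_{12})cs+a_{21}a_{03}s^{2}$ and $\beta^{2}=a_{21}^{2}c^{2}+2a_{21}a_{12}cs+a_{12}^{2}s^{2}$. Subtracting, the $cs$-coefficient becomes $a_{30}a_{03}-a_{21}a_{12}$, the $c^{2}$-coefficient is $a_{30}a_{12}-a_{21}^{2}$, and the $s^{2}$-coefficient is $a_{21}a_{03}-a_{12}^{2}$. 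These are exactly the coefficients in the final bracket of \eqref{eq:detH}, so the identity holds. No step is genuinely difficult; the main risk is a sign or factor-of-two slip in the cross term, which the substitution above keeps under control.
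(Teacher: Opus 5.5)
Your proposal is correct and matches the paper's approach: the paper states this lemma without further argument as an immediate consequence of Lemma~\ref{lem:2jet}, obtained by expanding $AC-B^{2}$. Your auxiliary linear forms in $(\cos\varphi,\sin\varphi)$ organise that expansion cleanly, and each of the coefficients you obtain agrees with \eqref{eq:detH}.
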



We now combine Proposition~\ref{prop:critical-directions} with
Lemma~\ref{lem:detH}.
A critical direction is an $A_{\ge 2}$-direction precisely when the
Hessian determinant in~\eqref{eq:detH} vanishes.

\begin{prop}[Stratification of $2$-jets]
\label{prop:2jet-stratification}
For $p\in\mathcal{M}$, the $A_{\ge 2}$-directions of $I_{\mathbf{v}}$
at $p$ are described as follows.
\begin{enumerate}
    \item \textbf{Elliptic or hyperbolic point} ($k_{1}k_{2}\neq 0$).
    At the unique critical direction $\mathbf{v}=N(p)$,
    $\mathrm{Hess}\,I_{\mathbf{v}}(0)
        =\mathrm{diag}(-k_{1}^{\,2},-k_{2}^{\,2})$
    is negative definite;
    hence $I_{\mathbf{v}}$ has an $A_{1}^{+}$-singularity at $p$, and
    no $A_{\ge 2}$-direction exists.

    \item \textbf{Non-flat parabolic point}, under
    the setting~\eqref{eq:para-conv} ($k_{1}=0$, $k_{2}\neq 0$).
    The set of critical directions is the asymptotic normal section
    $\{\sin\varphi=0\}$.
    Specialising~\eqref{eq:detH} to $\varphi=0$ and $k_{1}=0$,
    \begin{equation}\label{eq:detH-para}
        \det\mathrm{Hess}\,I_{\mathbf{v}}(0)
        \;=\;
        \sin\theta\,\Bigl[\,a_{30}\,k_{2}^{\,2}\cos\theta
                +(a_{30}a_{12}-a_{21}^{\,2})\sin\theta\,\Bigr],
    \end{equation}
    which vanishes if and only if $\sin\theta=0$ or
    \begin{equation}\label{eq:cosRule}
        a_{30}\,k_{2}^{\,2}\cos\theta
        +(a_{30}a_{12}-a_{21}^{\,2})\sin\theta \;=\; 0.
    \end{equation}
    The three exhaustive sub-cases are:
    \begin{enumerate}
        \item \emph{Fold of Gauss} ($a_{30}\neq 0$).
              Equation~\eqref{eq:cosRule} has a unique solution
              $\theta^{*}\in(0,\pi)$, distinct from $\theta=0$.
              The $A_{\ge 2}$-directions on the asymptotic normal
              section are exactly $\mathbf{v}=N(p)$ and
              $\mathbf{v}_{\theta^{*}\!,0}$;
              all other directions on the great circle are
              $A_{1}$-directions.
        \item \emph{Degenerate singularity of Gauss} ($a_{30}=0$).
              Equation~\eqref{eq:cosRule} reduces to
              $-a_{21}^{\,2}\sin\theta=0$, and the analysis branches
              on $a_{21}$:
        \begin{enumerate}
            \item[(b-1)] \emph{$\Sigma$-regular type} ($a_{21}\neq 0$).
                  The unique $A_{\ge 2}$-direction on the asymptotic
                  normal section is $\mathbf{v}=N(p)$;
                  all other directions on the great circle are
                  $A_{1}$-directions.
            \item[(b-2)] \emph{$\Sigma$-singular type} ($a_{21}=0$).
                  $\det\mathrm{Hess}\,I_{\mathbf{v}}(0)\equiv 0$ on the
                  asymptotic normal section, so every direction on the
                  great circle is an $A_{\ge 2}$-direction and no
                  $A_{1}$-direction occurs among the critical directions.
        \end{enumerate}
    \end{enumerate}

    \item \textbf{Flat umbilic} ($k_{1}=k_{2}=0$).
    Every direction $\mathbf{v}\in S^{2}$ is a critical direction;
    in particular $I_{\mathbf{v}}$ has a singularity at $p$ for every
    $\mathbf{v}\in S^{2}$.
\end{enumerate}
\end{prop}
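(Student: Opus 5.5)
The proof combines Proposition~\ref{prop:critical-directions}, which identifies the critical directions, with the Hessian determinant of Lemma~\ref{lem:detH}, specialised case by case. A critical direction is an $A_{\ge2}$-direction exactly when this determinant vanishes.

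\textbf{Elliptic or hyperbolic points.} Here $k_1k_2\neq0$, so the critical directions are $\theta=0$ and $\theta=\pi$, i.e.\ $\mathbf{v}=\pm N(p)$. At $\theta=0$ all terms of Lemma~\ref{lem:2jet} carrying a factor $\sin\theta$ vanish. Hence
\[
\mathrm{Hess}\,I_{\mathbf v}(0)=-\cos\theta\,\mathrm{diag}(k_1^2,k_2^2),
\]
which is definite since $k_1k_2\neq0$; the same holds at $\theta=\pi$ with the opposite sign. Both directions are therefore Morse extrema, of type $A_1^{\pm}$. The $\mathcal K$-sign convention must be handled here: multiplying by $-1$ is permitted in $\mathcal K$-equivalence, so the negative-definite case at $N(p)$ is labelled $A_1^+$. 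I would note that $-N(p)$ is likewise a Morse extremum; the statement's "only $A_1$-direction is $N(p)$" presumably counts directions up to sign, or restricts to the hemisphere $\theta\in[0,\pi/2]$.

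\textbf{Non-flat parabolic points.} Assume \eqref{eq:para-conv}, so $k_1=0$ and $k_2\neq0$, and the critical directions form the great circle $\{\sin\varphi=0\}$. On this circle, \eqref{eq:detH} reduces to \eqref{eq:detH-para}. I would then check the classical characterisations in Monge form. A fold of Gauss corresponds to $a_{30}\neq0$, since the height function $Q$ along $N(p)$ has cubic term $a_{30}u^3/6$ in the kernel direction $u$, and this term gives $A_2$ precisely when $a_{30}\neq0$. When $a_{30}=0$, regularity of $\Sigma$ at $p$ is equivalent to $a_{21}\neq0$. To see this, note that in Monge form $K$ has the same sign as $Q_{uu}Q_{vv}-Q_{uv}^2$, whose linear part at the origin is $k_2(a_{30}u+a_{21}v)$. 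So $dK(0)\neq0$ if and only if $(a_{30},a_{21})\neq(0,0)$.

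The three sub-cases are then read off from Proposition~\ref{prop:2jet-stratification}:
\begin{itemize}
\item \emph{Fold of Gauss.} Equation \eqref{eq:cosRule} has the single root $\theta^*$ with $\cot\theta^*=-(a_{30}a_{12}-a_{21}^2)/(a_{30}k_2^2)$, and $\sin\theta^*\neq0$. Together with $\sin\theta=0$, this gives exactly two $A_{\ge2}$-directions on the circle, once $\theta$ and $\theta+\pi$ are identified as a projective pair.
\item \emph{$a_{30}=0$, $a_{21}\neq0$.} The determinant equals $-a_{21}^2\sin^2\theta$, which vanishes only at $\sin\theta=0$.
\item \emph{$a_{30}=a_{21}=0$.} The determinant vanishes identically on the circle.
\end{itemize}

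The \emph{flat umbilic} case is Proposition~\ref{prop:critical-directions}(3).

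The remaining and hardest step is identifying the precise types $A_3$ at $N(p)$ for a fold of Gauss and $A_{\ge5}$ in case~3. At $\theta=0$ we have $I_{N(p)}=(1+Q_u^2+Q_v^2)^{-1/2}$, so up to $\mathcal K$-equivalence the germ is $f=Q_u^2+Q_v^2$, because $I_{N(p)}-1=-\tfrac12 f+O(f^2)$. With $k_1=0$, write $Q_v=k_2v+\cdots$. Since $Q_{vv}(0)\neq0$, the implicit function theorem lets me change coordinates to $w=Q_v$. Then $f=w^2+Q_u^2$, and after completing the square, $f\sim_{\mathcal K} w^2+q(u)^2$, where $q(u)=Q_u(u,v(u))$ and $v(u)$ solves $Q_v=0$. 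The order of $q$ equals the order of the height-function singularity minus one. A fold of Gauss ($A_2$ height function) gives $q\sim u^2$, hence $f\sim w^2+u^4$, type $A_3$. The case $a_{30}=0$ gives $q$ of order at least $3$, hence $f\sim w^2+u^{2m}$ with $2m\ge6$, type $A_{\ge5}$. For case~4 the statement makes no type claim beyond non-Morse, so nothing further is needed there.

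The main obstacle is this reduction to $w^2+q(u)^2$, and in particular verifying that the correction terms in $\tfrac12 f+O(f^2)$ do not alter the $\mathcal K$-class. This holds because $I-1$ is a unit multiple of $f$.
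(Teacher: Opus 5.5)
Your argument for the proposition itself is correct and is essentially the paper's. You restrict to the critical directions of Proposition~\ref{prop:critical-directions}, specialise \eqref{eq:detH} on $\{\sin\varphi=0\}$ to \eqref{eq:detH-para}, and read off its zeros, counting antipodal directions once, just as the paper does by fixing $\varphi=0$. Your remark that $-N(p)$ is also a Morse critical direction in case~1 is a fair correction of the statement's word ``unique''. One slip: the sub-cases should be said to follow from \eqref{eq:detH-para}, not ``from Proposition~\ref{prop:2jet-stratification}'', which is the statement being proved.

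Your last paragraph is not needed here. This is a pure $2$-jet statement, and the types $A_3$ and $A_{\ge5}$ at $N(p)$ are proved separately in Proposition~\ref{prop:type-normal}, where the paper evaluates Fukui's criteria on explicit Taylor coefficients. That said, your reduction $I_{N(p)}-1\sim_{\mathcal K}Q_u^2+Q_v^2\sim_{\mathcal K}w^2+q(u)^2$, with $q=\tilde g'$ for the reduced height function $\tilde g(u)=Q(u,v(u))$, is a valid and more conceptual alternative to that computation. It shows that a height function of type $A_k$ at $p$ gives exactly type $A_{2k-1}$ for $I_{N(p)}$, which explains the paper's remark that only odd-index types occur at the normal direction. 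The leading term $-\tfrac12 q(u)^2$ of the reduced function also reproduces the paper's invariants $-a_{30}^2/8$ and $-(a_{02}a_{40}-3a_{21}^2)^2/(72a_{02}^2)$ without any Fukui-type bookkeeping.
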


\begin{proof}
For case~1, the unique critical direction is $\mathbf{v}=N(p)$,
i.e.\ $\sin\theta=0$ and $\cos\theta=1$.
Setting $\sin\theta=0$ in~\eqref{eq:detH} gives
$k_{1}^{\,2}k_{2}^{\,2}>0$, and \eqref{eq:A}--\eqref{eq:C} at $\theta=0$
yield
$\mathrm{Hess}\,I_{\mathbf{v}}(0)=\mathrm{diag}(-k_{1}^{\,2},-k_{2}^{\,2})$,
which is negative definite under the elliptic and hyperbolic
hypotheses.
Hence $I_{\mathbf{v}}$ has a Morse singularity of type $A_{1}^{+}$
at $p$, and no $A_{\ge 2}$-direction exists.

For case~2, Proposition \ref{prop:critical-directions} allows us to restrict the light direction
to the great circle $\sin\varphi=0$.
Choosing the representative $\varphi=0$ (the case $\varphi=\pi$ is
antipodal, and produces an identical analysis up to sign),
and using~\eqref{eq:para-conv}, the
formula~\eqref{eq:detH} simplifies to~\eqref{eq:detH-para}.
\smallskip\noindent
\emph{Sub-case (2a):} If $a_{30}\neq 0$, then~\eqref{eq:cosRule} is a
nondegenerate linear equation in $\cos\theta$ and $\sin\theta$, with a
unique solution $\theta^{*}\in(0,\pi)$ characterised by
\[
    \tan\theta^{*}
    \;=\;
    \frac{a_{30}\,k_{2}^{\,2}}{a_{21}^{\,2}-a_{30}a_{12}}
\]
(when the denominator vanishes, $\theta^{*}=\pi/2$).
Combined with the solution $\theta=0$ of $\sin\theta=0$, we obtain
exactly two $A_{\ge 2}$-directions on the asymptotic normal section.
For any other $\theta\in(0,\pi)$, $\theta\neq\theta^{*}$, the bracketed
factor in~\eqref{eq:detH-para} is nonzero, and combined with
$\sin\theta\neq 0$ we have
$\det\mathrm{Hess}\,I_{\mathbf{v}}(0)\neq 0$, so $\mathbf{v}_{\theta,0}$
is an $A_{1}$-direction.

\smallskip\noindent
\emph{Sub-case (2b):} Assume $a_{30}=0$.
The bracketed factor in~\eqref{eq:detH-para} reduces to
$-a_{21}^{\,2}\sin\theta$, and
the analysis branches according to $a_{21}$.

\emph{(2b-1)} If, in addition, $a_{21}\neq 0$, then
$-a_{21}^{\,2}\sin\theta$ vanishes if and only if $\sin\theta=0$.
Hence the unique $A_{\ge 2}$-direction on the asymptotic normal section
is the normal direction $\mathbf{v}=N(p)$; every other direction
$\mathbf{v}_{\theta,0}$ with $\theta\in(0,\pi)$ is an $A_{1}$-direction.

\emph{(2b-2)} If, in addition, $a_{21}=0$, then~\eqref{eq:detH-para}
becomes $\det\mathrm{Hess}\,I_{\mathbf{v}}(0)=0$ {identically} on
the asymptotic normal section.
Every direction on the great circle satisfies the
$A_{\ge 2}$-condition, and no $A_{1}$-direction occurs among the
critical directions.

For case 3, $\nabla I_{\mathbf{v}}(0)\equiv 0$ for every $\mathbf{v}$ by
Proposition~\ref{prop:critical-directions}.
\end{proof}

\begin{rem}\label{rem:Sigma-regular}
At a non-flat parabolic point $p\in\mathcal{M}$ in
Monge form~\eqref{eq:Monge} under the setting~\eqref{eq:para-conv},
the parabolic set $\Sigma=\{LN-M^{2}=0\}$ has $1$-jet at the origin
$k_{2}(a_{30}u+a_{21}v)$. Hence $\Sigma$ is regular at $p$
if and only if $(a_{30},a_{21})\neq(0,0)$.
The three exhaustive sub-cases of
Proposition~\ref{prop:2jet-stratification}~(2) thus correspond to:
\begin{itemize}
\item$(a_{30}\neq 0)$ -- fold of Gauss;
\item$(a_{30}=0,\,a_{21}\neq 0)$ -- $\Sigma$ regular but $N|_{\Sigma}$
singular at $p$;
\item$(a_{30}=a_{21}=0)$ -- $\Sigma$ singular at $p$.
\end{itemize}
The first two are the $\Sigma$-regular settings traditionally treated in the literature \cite{BGM1982,IRFT}, while the third is a
higher-codimension stratum \cite{HKS2024, Honda2021}.
\end{rem}

\begin{rem}\label{rem:thm11-2jet-part}
Proposition~\ref{prop:2jet-stratification} establishes
Theorem~\ref{thm:stratification} at the level of the $2$-jet of
$I_{\mathbf{v}}$.
The correspondence between cases is as follows:
\begin{itemize}
    \item Case 1 of Theorem~\ref{thm:stratification}
          (elliptic or hyperbolic point) is completely settled by
          Proposition~\ref{prop:2jet-stratification}~(1),
          with the type sharpened from $A_{1}$ to $A_{1}^{+}$.
    \item Case 2 (fold of Gauss) corresponds to
          Proposition~\ref{prop:2jet-stratification}~(2a):
          the location and number of $A_{\ge 2}$-directions are
          settled, and only the precise type $A_{3}$ at the normal
          direction $\mathbf{v}=N(p)$ remains for the higher jets.
    \item Case 3 (degenerate singularity of Gauss with $\Sigma$
          regular at $p$) corresponds to
          Proposition~\ref{prop:2jet-stratification}~(2b-1): again
          the location of $A_{\ge 2}$-directions is settled, and only
          the precise type $A_{\ge 5}$ at $\mathbf{v}=N(p)$ remains.
    \item Case 4 (degenerate singularity of Gauss with $\Sigma$
          singular at $p$) corresponds to
          Proposition~\ref{prop:2jet-stratification}~(2b-2) and is
          completely settled at the $2$-jet level: the entire
          asymptotic normal section consists of $A_{\ge 2}$-directions.
    \item Case 5 (flat umbilic point) is settled by
          Proposition~\ref{prop:2jet-stratification}~(3).
\end{itemize}
The remaining type identifications $A_{3}$ in case~2 and $A_{\ge 5}$
in case~3 are obtained from the $3$- and higher-order jets, which we
develop in the next subsection.
\end{rem}

\begin{rem}\label{rem:second-direction-equation}
Equation~\eqref{eq:cosRule} will play a central role in the higher-order
analysis: at a fold of Gauss, it is the unique condition imposed by
$A_{\ge 2}$-degeneracy at the second $A_{\ge 2}$-direction
$\mathbf{v}_{\theta^{*}\!,0}$, and substituting it back into the higher
jets reduces the singularity-type question for
$\mathbf{v}_{\theta^{*}\!,0}$ to a polynomial condition in
$a_{ij}\,(i+j\le 4)$ alone.
This is carried out in \S\ref{subsec:higher-jets}.
\end{rem}

\subsection{Higher-jet analysis}
\label{subsec:higher-jets}

The main tool is Fukui's criterion \cite{Fukui2011} for identifying
the precise singularity type at each $A_{\ge 2}$-direction; we recall
it in \S\ref{subsubsec:fukui}.

\subsubsection{Fukui's $A_{k}$-criterion}
\label{subsubsec:fukui}

Let $f\colon(\mathbb{R}^{2},0)\to(\mathbb{R},0)$ be a smooth function
germ with $df(0)=0$, and write
\begin{equation}\label{eq:f-expansion}
    f(u,v)\;=\;\sum_{k\ge 2}f_{k}(u,v),
    \qquad
    f_{k}(u,v) \;=\;
    \!\sum_{i+j=k}\!\frac{c_{ij}}{i!\,j!}\,u^{i}\,v^{j},
\end{equation}
so that $f_{k}$ is the homogeneous part of degree $k$.
For $i+j\le k$, set
$f_{k,ij}:=\partial^{i+j}f_{k}/\partial u^{i}\,\partial v^{j}$,
a homogeneous polynomial of degree $k-(i+j)$, and let $H_{f}(0)$
denote the Hessian of $f$ at the origin.

\begin{lem}[Rank-one form, Fukui {\cite[\S 1.1]{Fukui2011}}]
\label{lem:rank-one}
The Hessian $H_{f}(0)$ has rank $1$ if and only if there exist
$(du,dv)\neq(0,0)$ and $s\neq 0$ with
\begin{equation}\label{eq:rank-one}
    H_{f}(0) \;=\; s\,
    \begin{pmatrix} dv^{\,2} & -du\,dv\\
                    -du\,dv & du^{\,2} \end{pmatrix}.
\end{equation}
The pair $(du,dv)$ is unique up to a nonzero scalar and gives
the direction of $\ker H_{f}(0)$, and $s$ is then determined
by~\eqref{eq:rank-one}.
\end{lem}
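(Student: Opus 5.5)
The plan is to reduce the statement to the spectral decomposition of a real symmetric $2\times 2$ matrix, and then read off the kernel directly.

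For the forward direction, suppose $H_{f}(0)$ has rank $1$. Since it is real symmetric, the spectral theorem gives an orthonormal eigenbasis. Exactly one eigenvalue $\lambda$ is nonzero, so $H_{f}(0)=\lambda\,\mathbf{w}\mathbf{w}^{T}$ for some unit vector $\mathbf{w}=(w_{1},w_{2})$. We define $(du,dv):=(-w_{2},w_{1})$, which is nonzero, and set $s:=\lambda\neq 0$. Then $dv^{2}=w_{1}^{2}$, $-du\,dv=w_{1}w_{2}$ and $du^{2}=w_{2}^{2}$, so $\mathbf{w}\mathbf{w}^{T}$ is exactly the matrix in \eqref{eq:rank-one}. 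This proves the existence of $(du,dv)$ and $s$.

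For the converse, suppose $H_{f}(0)$ has the form \eqref{eq:rank-one} with $s\neq 0$ and $(du,dv)\neq(0,0)$. Its determinant is $s^{2}(dv^{2}du^{2}-du^{2}dv^{2})=0$. The matrix is nonzero because at least one of the diagonal entries $s\,dv^{2}$, $s\,du^{2}$ is nonzero. Hence its rank is exactly $1$.

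For the kernel and uniqueness, we compute directly that
\[
s\begin{pmatrix} dv^{2} & -du\,dv\\ -du\,dv & du^{2}\end{pmatrix}\begin{pmatrix}du\\ dv\end{pmatrix}=s\begin{pmatrix}du\,dv^{2}-du\,dv^{2}\\ -du^{2}dv+du^{2}dv\end{pmatrix}=0 .
\]
So $(du,dv)$ spans $\ker H_{f}(0)$, which is one-dimensional because the rank is $1$. Therefore any other admissible pair is of the form $c\,(du,dv)$ with $c\neq 0$.

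Replacing $(du,dv)$ by $c\,(du,dv)$ multiplies the bracketed matrix by $c^{2}$. Equation \eqref{eq:rank-one} then forces $s$ to become $s/c^{2}$. In practice, once a representative $(du,dv)$ is fixed, $s$ is recovered from any nonzero entry, for example $s=c_{20}/dv^{2}$ if $dv\neq0$, or $s=c_{02}/du^{2}$ otherwise.

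There is no real obstacle here. The only point needing care is the bookkeeping in the uniqueness claim: $s$ is determined only after a representative of the kernel direction is chosen, and it rescales by $c^{-2}$ under $(du,dv)\mapsto c\,(du,dv)$.
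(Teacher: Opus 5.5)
Your proof is correct. Both directions, the kernel computation, and the rescaling $s\mapsto s/c^{2}$ under $(du,dv)\mapsto c\,(du,dv)$ are all right.

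The paper gives no proof of its own here; it simply quotes the lemma from Fukui. So the only comparison is that the spectral theorem is more machinery than the statement needs.

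\emph{A more elementary route.} Write $H_{f}(0)=\bigl(\begin{smallmatrix} a & b\\ b & c\end{smallmatrix}\bigr)$. Then rank one means $ac=b^{2}$ with $(a,c)\neq(0,0)$.
\begin{itemize}
\item If $a\neq 0$, take $(du,dv)=(-b,a)$ and $s=1/a$.
\item If $c\neq 0$, take $(du,dv)=(c,-b)$ and $s=1/c$.
\end{itemize}
Equivalently, \eqref{eq:rank-one} says the quadratic part is $f_{2}(u,v)=\tfrac{s}{2}\,(dv\,u-du\,v)^{2}$. This is a nonzero multiple of the square of a linear form vanishing on $(du,dv)$, which makes the kernel claim immediate.

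\emph{What your version buys.} Your eigenvector construction loses nothing. Its key step, rotating the outer-product vector $\mathbf{w}$ to $(du,dv)=(-w_{2},w_{1})$, is exactly how the paper passes from \eqref{eq:Hess-second}, with $\mathbf{w}\propto(a_{30},a_{21})$, to \eqref{eq:second-rank-one}, where $(du,dv)=(-a_{21},a_{30})$. The elementary route, by contrast, avoids the spectral theorem and works verbatim for symmetric matrices over any field of characteristic $\neq 2$.
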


In what follows, $(du,dv)$ and $s$ refer to those satisfying
\eqref{eq:rank-one}.

\begin{thm}[$A_{k}$-criterion, Fukui {\cite[Theorems 1.1--1.4]{Fukui2011}}]
\label{thm:fukui}
Suppose $H_{f}(0)$ has rank one with $(du,dv)$ and $s$ as
in~\eqref{eq:rank-one}.
\begin{enumerate}
    \item $f$ has an $A_{2}$-singularity at $0$ if and only if
    $\;f_{3}(du,dv)\neq 0$.
    \item Suppose $f_{3}(du,dv)=0$ and $du\neq 0$.
    Then $f$ has an $A_{3}$-singularity at $0$ if and only if
    \begin{equation}\label{eq:fukui-A3}
        f_{4}(du,dv)\;-\;
        \frac{1}{2\,s\,du^{\,2}}\,f_{3,01}(du,dv)^{2}
        \;\neq\; 0.
    \end{equation}
    \item Suppose~\eqref{eq:fukui-A3} vanishes and $du\neq 0$.
    Then $f$ has an $A_{4}$-singularity at $0$ if and only if
    \begin{equation}\label{eq:fukui-A4}
    \begin{aligned}
        & f_{5}(du,dv)
        \;-\;\frac{1}{s\,du^{\,2}}\,
            f_{3,01}(du,dv)\,f_{4,01}(du,dv)\\
        &\qquad+\;\frac{1}{2\,s^{2}\,du^{\,4}}\,
            f_{3,02}(du,dv)\,f_{3,01}(du,dv)^{2}
        \;\neq\;0.
    \end{aligned}
    \end{equation}
    \item Suppose~\eqref{eq:fukui-A4} vanishes and $du\neq 0$.
    Then $f$ has an $A_{5}$-singularity at $0$ if and only if
    \begin{equation}\label{eq:fukui-A5}
    \begin{aligned}
        & f_{6}(du,dv)\\
        &-\frac{1}{2\,s\,du^{\,2}}\,
          \Bigl[2\,f_{3,01}(du,dv)\,f_{5,01}(du,dv)
                +f_{4,01}(du,dv)^{2}\Bigr]\\
        &+\frac{1}{2\,s^{2}\,du^{\,4}}\,
          f_{3,01}(du,dv)\,
          \Bigl[f_{3,01}(du,dv)\,f_{4,02}(du,dv)\\
        &\hphantom{+\frac{1}{2\,s^{2}\,du^{\,4}}\,
          f_{3,01}(du,dv)\,\Bigl[}
          +2\,f_{3,02}(du,dv)\,f_{4,01}(du,dv)\Bigr]\\
        &-\frac{1}{6\,s^{3}\,du^{\,6}}\,
          f_{3,01}(du,dv)^{2}\,
          \Bigl[f_{3,03}(du,dv)\,f_{3,01}(du,dv)\\
        &\hphantom{-\frac{1}{6\,s^{3}\,du^{\,6}}\,
          f_{3,01}(du,dv)^{2}\,\Bigl[}
          +3\,f_{3,02}(du,dv)^{2}\Bigr]
        \;\neq\;0.
    \end{aligned}
    \end{equation}
\end{enumerate}
If $du=0$, the formulas in the statements 2--4 hold with $f_{k,ij}$ replaced
by $f_{k,ji}$ and $1/du$ replaced by $1/dv$.
\end{thm}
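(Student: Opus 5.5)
This criterion is quoted from \cite{Fukui2011}, and we use it as stated. For completeness, here is how we would re-derive it: reduce to a one-variable function by the splitting lemma, then expand that function order by order.

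\emph{Normal form for the Hessian.} Assume $du\neq 0$; the case $du=0$ is obtained by exchanging the roles of $u$ and $v$. Introduce new linear coordinates $(t,w)$ by
\[
(u,v)=(t\,du,\;t\,dv+w).
\]
Here $t$ runs along $\ker H_f(0)$ and $w$ is a transverse coordinate along the $v$-axis. By \eqref{eq:rank-one}, the quadratic part is $f_2=\tfrac{s}{2}(dv\,u-du\,v)^2$, which in the new coordinates becomes $\tfrac{s\,du^{2}}{2}w^{2}$. Taylor expanding each homogeneous part along the $w$-direction gives
\[
f_k(t\,du,\,t\,dv+w)=\sum_{j=0}^{k}\frac{t^{k-j}w^{j}}{j!}\,f_{k,0j}(du,dv).
\]
Hence
\[
F(t,w):=f(t\,du,t\,dv+w)=\tfrac{s\,du^{2}}{2}w^{2}+\sum_{k\ge3}\sum_j \frac{t^{k-j}w^j}{j!}f_{k,0j}.
\]

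\emph{Splitting.} Since $\partial_w^2F(0)=s\,du^2\neq0$, the implicit function theorem gives a unique germ $w=\omega(t)$ with $\partial_wF(t,\omega(t))=0$ and $\omega(t)=O(t^2)$. The splitting (Morse) lemma with parameters then yields
\[
F\sim_{\mathcal R}\pm W^{2}+g(t),\qquad g(t):=F(t,\omega(t)).
\]
Since $\mathcal R$-equivalence implies $\mathcal K$-equivalence, $f$ is of type $A_k$ if and only if $\operatorname{ord}_t g=k+1$.

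\emph{Computing the coefficients of $g$.} Write $\omega=\omega_2t^2+\omega_3t^3+\omega_4t^4+\cdots$ and solve $\partial_wF=0$ degree by degree in $t$.
\begin{itemize}
\item At order $t^2$: $s\,du^2\omega_2+f_{3,01}=0$, so $\omega_2=-f_{3,01}/(s\,du^2)$.
\item At order $t^3$: $\omega_3$ is determined by $f_{4,01}$ and $f_{3,02}\,\omega_2$.
\item At order $t^4$: $\omega_4$ is determined similarly by $f_{5,01}$, $f_{4,02}\omega_2$, $f_{3,02}\omega_3$ and $f_{3,03}\omega_2^2$.
\end{itemize}
Substituting into $F$ gives the coefficients of $g$:
\begin{itemize}
\item The $t^3$-coefficient is $f_3(du,dv)$, which gives statement~1.
\item The $t^4$-coefficient is $f_4+f_{3,01}\omega_2+\tfrac{s\,du^2}{2}\omega_2^2=f_4-\tfrac{1}{2s\,du^2}f_{3,01}^2$, which gives \eqref{eq:fukui-A3}.
\item The $t^5$- and $t^6$-coefficients should reproduce \eqref{eq:fukui-A4} and \eqref{eq:fukui-A5}.
\end{itemize}

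\emph{Main obstacle: the $t^5$ and $t^6$ bookkeeping.} The computation of these two coefficients is where the effort lies. We would organise it with the envelope identity: because $\partial_wF(t,\omega(t))=0$, the terms in $g$ involving the highest-order unknown $\omega_j$ cancel. Concretely, the $t^5$-coefficient depends only on $\omega_2$, and the $t^6$-coefficient only on $\omega_2$ and $\omega_3$. This removes $\omega_4$ entirely and keeps the number of terms manageable.

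We would then check the result against the stated formulas term by term. A useful consistency check is the weighting: each occurrence of $f_{3,01}$ carries a factor $1/(s\,du^2)$, which accounts for the powers $s^{-1},s^{-2},s^{-3}$ paired with $du^{-2},du^{-4},du^{-6}$.
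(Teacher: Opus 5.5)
The paper gives no proof of this statement. It is imported from Fukui and used as a black box, so your opening sentence already matches what the paper does. Your splitting-lemma sketch is an independent, self-contained route, and it is correct.

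The step you leave as ``should reproduce'' does go through. Write $S=s\,du^{2}$, evaluate all $f_{k,ij}$ at $(du,dv)$, and write $g_{m}$ for the $t^{m}$-coefficient of $g$. With $\omega_{2}=-f_{3,01}/S$ and $\omega_{3}=-(f_{4,01}+f_{3,02}\,\omega_{2})/S$, the envelope cancellations you describe leave
\begin{gather*}
g_{5}=f_{5}+f_{4,01}\,\omega_{2}+\tfrac12 f_{3,02}\,\omega_{2}^{2},\\
g_{6}=f_{6}+f_{5,01}\,\omega_{2}+\tfrac12 f_{4,02}\,\omega_{2}^{2}+\tfrac16 f_{3,03}\,\omega_{2}^{3}-\frac{(f_{4,01}+f_{3,02}\,\omega_{2})^{2}}{2S}.
\end{gather*}
Substituting $\omega_{2}$ gives \eqref{eq:fukui-A4} and \eqref{eq:fukui-A5} exactly. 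These are the coefficients of $g$ unconditionally; the vanishing hypotheses in statements 3--4 only serve to make them the leading ones.

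Two small repairs are needed.
\begin{enumerate}
\item Your weighting heuristic is misstated. The term $f_{4,01}^{2}/(2s\,du^{2})$ carries $1/(s\,du^{2})$ but no $f_{3,01}$. The correct check is this: rescaling $w$ by $\lambda$ multiplies $S$ by $\lambda^{2}$ and $f_{k,0j}$ by $\lambda^{j}$ while leaving $g$ unchanged. Hence in every term the total number of $v$-derivatives equals twice the power of $1/S$.
\item ``$\mathcal R$ implies $\mathcal K$'' only gives the \emph{if} half of ``$A_k$ iff $\operatorname{ord}_t g=k+1$''. For \emph{only if} you need a $\mathcal K$-invariant. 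The Tjurina number $\dim\mathcal{E}_{2}/\langle f,\partial_{u}f,\partial_{v}f\rangle$ works: it equals $\operatorname{ord}_t g-1$ for $\pm W^{2}+g(t)$ (infinite if $g$ is flat) and equals $k$ for $A_k$.
\end{enumerate}

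As for what each route buys: the citation is short. Your derivation explains the shape of Fukui's expressions, which are just the Taylor coefficients of $f$ restricted to the polar curve $\partial_w F=0$. It also extends mechanically to $A_{6},A_{7},\dots$, which is what one would need to go beyond the $A_{5}$ level used in Proposition~\ref{prop:type-normal}.
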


\subsubsection{Type at the normal direction}
\label{subsubsec:type-normal}

\begin{prop}[Type at the normal direction]
\label{prop:type-normal}
At a non-flat parabolic point $p$ under~\eqref{eq:para-conv}, the
slant function $I_{N(p)}$ along the normal direction has the following
singularity type.
\begin{enumerate}
    \item If $a_{30}\neq 0$, $I_{N(p)}$ has
          an $A_{3}$-singularity.
    \item If $a_{30}=0$ and $a_{21}\neq 0$, $I_{N(p)}$ has
          an $A_{\ge 5}$-singularity.
          Moreover, if in addition $a_{02}\,a_{40}-3\,a_{21}^{\,2}\neq 0$,
          $I_{N(p)}$ has an $A_{5}$-singularity.
\end{enumerate}
\end{prop}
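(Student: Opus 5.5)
The plan is to apply Fukui's criterion (Theorem~\ref{thm:fukui}) directly to $f:=I_{N(p)}-1$. At $\theta=0$, formula~\eqref{eq:I-explicit} reduces to $I_{N(p)}=(1+Q_u^2+Q_v^2)^{-1/2}$. Writing $P:=Q_u^2+Q_v^2$ gives
\[
 f=-\tfrac12 P+\tfrac38 P^2-\tfrac5{16}P^3+\cdots .
\]
Under~\eqref{eq:para-conv} we have $Q_u=O(2)$ and $Q_v=k_2v+O(2)$, so $P$ starts in degree $2$. Hence only finitely many terms of this series contribute to $f_k$ for $k\le 6$, and $f_k$ is an explicit polynomial in the $a_{ij}$ with $i+j\le k+1$.

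\emph{Step 1 (low jets).} From Lemma~\ref{lem:2jet} at $\theta=0$, $f_2=-\tfrac12k_2^2v^2$. In Lemma~\ref{lem:rank-one} we may therefore take $(du,dv)=(1,0)$ with $s=-k_2^2$, and $du\neq0$ throughout. The cubic part is
\[
 f_3=-k_2v\Bigl(\tfrac{a_{21}}2u^2+a_{12}uv+\tfrac{a_{03}}2v^2\Bigr).
\]
Thus $f_3(1,0)=0$, so the type is never $A_2$. We also read off
\[
 f_{3,01}(1,0)=-\tfrac12k_2a_{21},\qquad f_{3,02}(1,0)=-2k_2a_{12},\qquad f_{3,03}(1,0)=-3k_2a_{03}.
\]

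\emph{Step 2 ($A_3$ test).} Evaluating the quartic part at $(1,0)$ kills every term containing $v$. The only survivors are $-\tfrac12(\tfrac{a_{30}}2)^2$ from $Q_u^2$ and $-\tfrac12(\tfrac{a_{21}}2)^2$ from the $u^2$-part of $Q_v$, giving $f_4(1,0)=-\tfrac18(a_{30}^2+a_{21}^2)$. Substituting into~\eqref{eq:fukui-A3}, the left-hand side becomes
\[
 -\tfrac18(a_{30}^2+a_{21}^2)+\tfrac{k_2^2a_{21}^2/4}{2k_2^2}=-\tfrac18a_{30}^2 .
\]
This is nonzero exactly when $a_{30}\neq0$, which gives item~1. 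When $a_{30}=0$ it vanishes, so the type is at least $A_4$, and we pass to the next test.

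\emph{Step 3 ($A_4$ and $A_5$ tests, $a_{30}=0$).} We expand $f_5$ and $f_6$, and also $f_{4,01}$, $f_{4,02}$ and $f_{5,01}$, at $(1,0)$. This only requires the $u$-pure and $u^{k}v$ coefficients of $Q_u$ and $Q_v$ up to degree $5$, namely the $a_{i0}$ and $a_{i1}$ with $i\le5$, together with $a_{12}$, $a_{03}$ and $a_{02}=k_2$.

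For~\eqref{eq:fukui-A4}, the expected outcome is identical cancellation once $a_{30}=0$. Heuristically, every term is bilinear in the degree-two part of $Q_v$ (which carries $a_{21}$) and a companion factor, and these cancel exactly as in Step~2. This cancellation forces the type to be $A_{\ge5}$.

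For~\eqref{eq:fukui-A5}, we substitute everything and simplify. The target is to show that the expression equals a nonzero constant multiple of $a_{21}^2\,(a_{02}a_{40}-3a_{21}^2)/k_2^2$, possibly after using $a_{02}=k_2$. Since $a_{21}\neq0$, this gives $A_5$ under the stated nondegeneracy condition.

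The main obstacle is the bookkeeping in Step~3. Many terms involving $a_{31}$, $a_{50}$, $a_{41}$, $a_{12}$ and $a_{03}$ must cancel, and an error in any one coefficient of $f_5$ or $f_6$ breaks those cancellations. To keep the computation controlled, I would first normalise by a weighted scaling argument: assign weight $1$ to $u$ and weight $2$ to $v$. Under these weights $f$ is quasi-homogeneous in its leading part $-\tfrac12(k_2v+\tfrac{a_{21}}2u^2)^2+(\text{weight-}6\ \text{terms in }u)$. Completing the square in $w:=k_2v+\tfrac{a_{21}}2u^2+\cdots$ then reduces the $A_k$-determination to the coefficient of $u^6$ after eliminating $w$. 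This gives an independent check on the Fukui computation, and shows directly why only $a_{40}$ and $a_{21}$ (and not the odd-weight coefficients) survive at order $6$.
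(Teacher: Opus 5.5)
Your route is the paper's: Fukui's criterion at $(du,dv)=(1,0)$ with $s=-k_2^2$. Steps~1--2 reproduce the paper's computation exactly, ending in $-\tfrac18a_{30}^2$ for~\eqref{eq:fukui-A3}. The gap is in Step~3, which carries all of item~2.

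\textbf{What goes wrong in Step~3.} You never carry the computation out, and the one concrete prediction you make there is false. Expression~\eqref{eq:fukui-A5} is not a multiple of $a_{21}^2(a_{02}a_{40}-3a_{21}^2)/k_2^2$. It equals
\[
-\frac{(a_{02}a_{40}-3a_{21}^{2})^{2}}{72\,a_{02}^{2}},
\]
which is quadratic in $a_{40}$. A quick test shows your form cannot be right. Take $Q=\tfrac12v^2+\tfrac{a_{40}}{24}u^4$. Then $P=v^2+\tfrac{a_{40}^2}{36}u^6$, so the type is $A_5$ and~\eqref{eq:fukui-A5} equals $-a_{40}^2/72\neq0$, while your target vanishes. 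Since~\eqref{eq:fukui-A5} is a rational function of the $a_{ij}$, your claimed identity would persist to $a_{21}=0$ by continuity, so it is false.

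The conclusion survives only because, for $a_{21}\neq0$, both expressions vanish on the same locus. As written, the argument is aimed at a wrong formula. Two smaller problems:
\begin{itemize}
\item The vanishing of~\eqref{eq:fukui-A4} is supported only by a heuristic.
\item Your list of required coefficients omits $a_{22}$. It enters both $f_{4,02}(1,0)$ and $f_{5,01}(1,0)$ and cancels only in the final combination, which is exactly the bookkeeping hazard you flagged.
\end{itemize}

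\textbf{How to close it.} Promote your completing-the-square remark from a check to the actual proof.
\begin{itemize}
\item With $du=1$, the expressions~\eqref{eq:fukui-A3}--\eqref{eq:fukui-A5} are exactly the $u^4$, $u^5$, $u^6$ coefficients of $g(u)=f(u,v^*(u))$, where $\partial_vf(u,v^*(u))=0$.
\item Since $f=\phi(P)$ with $\phi'(0)=-\tfrac12$, $v^*$ is also the fibrewise critical point of $P=Q_u^2+Q_v^2$.
\item In coordinates $(u,w)$ with $w=Q_v$, one has $P=w^2+R(u,w)^2$, where $R=Q_u$ and $\partial_wR(0)=Q_{uv}(0)/Q_{vv}(0)=0$.
\item Put $r(u)=R(u,0)=Q_u(u,v_0(u))$, where $Q_v(u,v_0(u))=0$. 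If $r=O(u^m)$, the fibrewise critical point is $w^*=O(u^{m+1})$, and so $g=-\tfrac12r^2+O(u^{2m+2})$.
\end{itemize}
Now read off the coefficients:
\begin{itemize}
\item In general $r=\tfrac{a_{30}}2u^2+O(u^3)$, which recovers $-\tfrac18a_{30}^2$.
\item If $a_{30}=0$, then $v_0=-\tfrac{a_{21}}{2a_{02}}u^2+O(u^3)$, giving $r=\tfrac{a_{02}a_{40}-3a_{21}^2}{6a_{02}}u^3+O(u^4)$.
\item Hence the $u^5$ coefficient vanishes identically, and the $u^6$ coefficient is the square displayed above. No assumption $a_{21}\neq0$ is needed.
\end{itemize}

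Since $r=\tfrac{d}{du}Q(u,v_0(u))$, this also shows that $I_{N(p)}$ is of type $A_{2k-1}$ exactly when the height function $h_{N(p)}$ is of type $A_k$. That explains the odd-index phenomenon the paper remarks on.

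Finally, the relevant weights are $u{:}1$, $w{:}3$, not $u{:}1$, $v{:}2$. With the latter, the leading part $-\tfrac12(k_2v+\tfrac{a_{21}}2u^2)^2$ is a degenerate square.
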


\begin{proof}
We apply Theorem~\ref{thm:fukui} to $I_{N(p)}$
under the setting~\eqref{eq:para-conv}.
By~\eqref{eq:A}--\eqref{eq:C} with $\theta=0$,
$\mathrm{Hess}\,I_{N(p)}(0)=\mathrm{diag}(0,-a_{02}^{\,2})$,
which is rank one with $(du,dv)=(1,0)$ and $s=-a_{02}^{\,2}$.
A direct calculation from~\eqref{eq:I-explicit} gives
\begin{equation}\label{eq:cij-normal}
\begin{aligned}
    c_{30}=0,\;\;
     c_{21}=-a_{02}\,a_{21},\;\;
     c_{12}=-2\,a_{02}\,a_{12},\;\;
     c_{03}=-3\,a_{02}\,a_{03},\\
    c_{40}=-3\,(a_{30}^{\,2}+a_{21}^{\,2}),\;\;
    c_{31}=-a_{02}\,a_{31}-3\,a_{21}\,a_{30}-3\,a_{12}\,a_{21},
\end{aligned}
\end{equation}
together with formulas for $c_{22}, c_{13}, c_{04}$ which we shall not need.
Since $c_{30}=0$, we have $f_{3}(1,0)=0$ and $I_{N(p)}$ is at least $A_{3}$.
With $f_{3,01}=\partial f_{3}/\partial v$,
\[
    f_{4}(1,0)=\frac{c_{40}}{24}=-\frac{a_{30}^{\,2}+a_{21}^{\,2}}{8},
    \qquad
    f_{3,01}(1,0)=\frac{c_{21}}{2}=-\frac{a_{02}\,a_{21}}{2},
\]
so the $A_{3}$ criterion~\eqref{eq:fukui-A3} evaluates to
\begin{equation}\label{eq:A3-normal}
    f_{4}(1,0)\;-\;\frac{1}{2\,s}\,f_{3,01}(1,0)^{2}
    \;=\;
    -\frac{a_{30}^{\,2}+a_{21}^{\,2}}{8}\;-\;
    \frac{1}{-2\,a_{02}^{\,2}}\cdot\frac{a_{02}^{\,2}\,a_{21}^{\,2}}{4}
    \;=\;
    -\frac{a_{30}^{\,2}}{8}.
\end{equation}
Hence $I_{N(p)}$ has an $A_{3}$-singularity if and only if $a_{30}\neq 0$,
which proves case~1.

For case~2, with $a_{30}=0$ the expression~\eqref{eq:A3-normal} vanishes,
so $I_{N(p)}$ is at least $A_{4}$.
Extending the Taylor expansion~\eqref{eq:cij-normal} to higher order
and applying the criteria~\eqref{eq:fukui-A4}--\eqref{eq:fukui-A5}
by the same method, one finds that the $A_{4}$ criterion vanishes
identically (so $I_{N(p)}$ is at least $A_{5}$), and the $A_{5}$
criterion evaluates to
\(
    -(a_{02}\,a_{40}-3\,a_{21}^{\,2})^{2}/(72\,a_{02}^{\,2}).
\)
\end{proof}

\begin{rem}\label{rem:thm11-completed}
Combining Proposition~\ref{prop:type-normal} with
Proposition~\ref{prop:2jet-stratification}, the type identifications
required for cases~(2) and~(3) of Theorem~\ref{thm:stratification}
are now complete: at a fold of Gauss the normal direction has type
$A_{3}$, and at a $\Sigma$-regular degenerate singularity of Gauss
with $a_{02}\,a_{40}-3\,a_{21}^{\,2}\neq 0$ the normal direction has
type $A_{5}$.
Together with the $2$-jet analysis of \S\ref{subsec:2jet}, this
completes the proof of Theorem~\ref{thm:stratification}.
\end{rem}

\subsubsection{Type at the second $A_{\ge 2}$-direction}
\label{subsubsec:type-second}

We now treat the second $A_{\ge 2}$-direction at a fold of Gauss:
the direction $\mathbf{v}_{\theta^{*}\!,0}$, with $\theta^{*}\in(0,\pi)$
satisfying~\eqref{eq:cosRule}, under the assumption $a_{30}\neq 0$.
A direct calculation from Lemma~\ref{lem:2jet} and~\eqref{eq:cosRule}
gives
\begin{equation}\label{eq:Hess-second}
    \mathrm{Hess}\,I_{\mathbf{v}_{\theta^{*}\!,0}}(0)
    \;=\;
    -\,\frac{\sin\theta^{*}}{a_{30}}
    \begin{pmatrix} a_{30}\\ a_{21}\end{pmatrix}
    \begin{pmatrix} a_{30} & a_{21}\end{pmatrix},
\end{equation}
which is in rank-one form~\eqref{eq:rank-one} with
\begin{equation}\label{eq:second-rank-one}
    (du,dv) = (-a_{21},\,a_{30}),
    \qquad
    s \;=\; -\,\frac{\sin\theta^{*}}{a_{30}}\;\neq\;0.
\end{equation}

\paragraph{Notation.}
For the homogeneous polynomials $H_{k}(u,v)$ of degree $k$ appearing
in the Monge expansion~\eqref{eq:Monge}, we write
$H_{k,u}:=\partial H_{k}/\partial u$,
$H_{k,v}:=\partial H_{k}/\partial v$,
$H_{k,uu}:=\partial^{2}H_{k}/\partial u^{2}$,
$H_{k,vv}:=\partial^{2}H_{k}/\partial v^{2}$, etc., for partial
derivatives.
By the homogeneity of $H_{k}$, each such derivative is itself a
homogeneous polynomial in $(u,v)$ of degree $k$ minus the order of
differentiation, and we evaluate it at $(u,v)=(-a_{21},a_{30})$.

Define the homogeneous polynomials
\begin{align}
    \alpha_{2} &\;:=\;
    a_{02}\,{H_{4}}_{,u}(-a_{21},a_{30})
    \;-\;(a_{30}a_{12}-a_{21}^{\,2})\,{H_{3}}_{,v}(-a_{21},a_{30}),
    \label{eq:alpha2}
    \\[0.4em]
    \alpha_{3} &\;:=\;
    \begin{aligned}[t]
    & \Bigl[-8\,a_{30}\,{H_{5}}_{,u}(-a_{21},a_{30})
      +4\,{H_{4}}_{,uu}(-a_{21},a_{30})^{2}\\
    &\;-\,a_{02}^{\,2}\,a_{30}^{\,4}\,(a_{30}a_{12}-a_{21}^{\,2})\Bigr]\,a_{02}^{\,2}\\
    &-\,8\,a_{30}\,(a_{30}a_{12}-a_{21}^{\,2})\,
      \Bigl[a_{12}\,{H_{4}}_{,uu}(-a_{21},a_{30})
            +2\,{H_{4}}_{,v}(-a_{21},a_{30})\\
    &\hphantom{-\,8\,a_{30}\,(a_{30}a_{12}-a_{21}^{\,2})\,\Bigl[}
            -\,a_{30}\,{H_{4}}_{,vv}(-a_{21},a_{30})\Bigr]\,a_{02}\\
    &-\,(a_{30}a_{12}-a_{21}^{\,2})^{2}\,
      \Bigl[9\,(a_{30}a_{12}-a_{21}^{\,2})^{2}
            +a_{30}\,a_{12}\,(a_{30}a_{12}-a_{21}^{\,2})\\
    &\hphantom{-\,(a_{30}a_{12}-a_{21}^{\,2})^{2}\,\Bigl[}
            +14\,a_{30}^{\,2}\,(a_{21}\,a_{03}-a_{12}^{\,2})\Bigr].
    \end{aligned}
    \label{eq:alpha3}
\end{align}

\begin{prop}[Type at the second $A_{\ge 2}$-direction]
\label{prop:type-second}
At a fold of Gauss $p$ ($a_{30}\neq 0$), the slant function
$I_{\mathbf{v}_{\theta^{*}\!,0}}$ along the second $A_{\ge 2}$-direction
has the following singularity type.
\begin{enumerate}
    \item $I_{\mathbf{v}_{\theta^{*}\!,0}}$ has an
          $A_{2}$-singularity at $p$ if and only if $\alpha_{2}\neq 0$.
    \item Suppose $\alpha_{2}=0$.
          Then $I_{\mathbf{v}_{\theta^{*}\!,0}}$ has an
          $A_{3}$-singularity at $p$ if and only if $\alpha_{3}\neq 0$.
\end{enumerate}
\end{prop}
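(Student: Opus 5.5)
We apply Theorem~\ref{thm:fukui} to $f=I_{\mathbf{v}_{\theta^{*}\!,0}}-\cos\theta^{*}$. By Proposition~\ref{prop:critical-directions} the point $0$ is critical, and by \eqref{eq:Hess-second}--\eqref{eq:second-rank-one} the Hessian is of rank one, with kernel direction $(du,dv)=(-a_{21},a_{30})$ and $s=-\sin\theta^{*}/a_{30}$. Since $a_{30}\neq0$, we have $dv\neq0$. When $a_{21}\neq 0$ we also have $du\neq 0$ and may use the criteria as stated; otherwise we use the transposed version with $1/dv$. Throughout we set $\varphi=0$ in \eqref{eq:I-explicit}, so that
\[
f=\frac{\cos\theta^{*}-Q_{u}\sin\theta^{*}}{\sqrt{1+Q_u^2+Q_v^2}}-\cos\theta^{*}.
\]
We expand this in homogeneous parts $f_k$ in terms of $H_k$ and their derivatives, using $k_1=0$ and $k_2=a_{02}$. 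Writing $W=1+Q_u^2+Q_v^2$ and $Q_v=a_{02}v+H_{3,v}+\cdots$, we obtain
\[
f_3=-\sin\theta^{*}\,H_{4,u}-\tfrac12\cos\theta^{*}\cdot 2a_{02}vH_{3,v}+\sin\theta^{*}\,\tfrac12 a_{02}^2v^2H_{3,u}.
\]
In general, $f_k$ is a combination of $\sin\theta^{*}H_{k+1,u}$, $\cos\theta^{*}$ times quadratic expressions in $Q_u,Q_v$, and products with $W^{-1/2}$.

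\textbf{Step 1 ($A_2$).} We evaluate $f_3(du,dv)$. The key simplification is to eliminate $\theta^{*}$ using \eqref{eq:cosRule}: since $\sin\theta^{*}\neq0$, we may divide by $\sin\theta^{*}$ and replace $\cos\theta^{*}/\sin\theta^{*}$ by $-(a_{30}a_{12}-a_{21}^2)/(a_{30}a_{02}^2)$. Euler's relation on $H_3$ evaluated at $(-a_{21},a_{30})$ then combines the terms, and we expect
\[
f_3(du,dv)=c\,\frac{\sin\theta^{*}}{a_{02}a_{30}}\,\alpha_2
\]
for an explicit nonzero constant $c$. The point to verify is that the cubic terms $H_{3,u}$ evaluated on the kernel direction cancel or recombine. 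This should work because $H_{3,u}(-a_{21},a_{30})$ involves precisely the entries appearing in \eqref{eq:Hess-second}: the kernel direction is orthogonal to $(a_{30},a_{21})=\nabla H_{3,u}$ at the origin, which should kill the $v^2H_{3,u}$-type leftovers up to the $a_{02}$-multiple of $H_{3,v}$. Part~1 then follows directly from Theorem~\ref{thm:fukui}(1).

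\textbf{Step 2 ($A_3$).} Assuming $\alpha_2=0$, we compute the quantity \eqref{eq:fukui-A3},
\[
f_4(du,dv)-\frac{1}{2s\,du^2}f_{3,01}(du,dv)^2,
\]
or its transposed version. This requires $f_4$, which involves $H_{5,u}$, products $H_{3}\cdot H_4$, and the quartic terms of $W^{-1/2}$, together with $f_{3,01}$. We again eliminate $\cos\theta^{*}$ via \eqref{eq:cosRule}. We also use $\alpha_2=0$ to eliminate $H_{4,u}(-a_{21},a_{30})$, replacing it by $(a_{30}a_{12}-a_{21}^2)H_{3,v}/a_{02}$. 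After clearing the common factor (some power of $\sin\theta^{*}$, $a_{02}$ and $a_{30}$), the result should be a nonzero multiple of $\alpha_3$. We will also use the identities $H_{k,u}(-a_{21},a_{30})$ and $H_{k,uu}$ on the kernel direction, together with Euler's formula, to rewrite all terms in the form appearing in \eqref{eq:alpha3}.

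\textbf{Main obstacle.} The main obstacle is Step~2: the bookkeeping of the degree-four expansion and the systematic elimination of $\theta^{*}$ and of $H_{4,u}$. We would carry this out with computer algebra. We treat $a_{ij}$ ($i+j\le5$) and $t=\tan\theta^{*}$ as symbols, impose $t=a_{30}a_{02}^2/(a_{21}^2-a_{30}a_{12})$, reduce modulo $\alpha_2$, and compare the result with $\alpha_3$ up to a unit factor. The degenerate subcase $a_{21}=0$ (where $du=0$) would be checked separately, using the transposed criterion.
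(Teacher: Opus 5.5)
Your plan is the paper's own argument. You apply Fukui's criterion at the kernel direction $(du,dv)=(-a_{21},a_{30})$ with $s=-\sin\theta^{*}/a_{30}$ and eliminate $\theta^{*}$ through \eqref{eq:cosRule}. For part~2 you eliminate $a_{13}$ via $\alpha_{2}=0$; this is equivalent to eliminating $H_{4,u}(-a_{21},a_{30})$, whose $a_{13}$-coefficient is $a_{30}^{3}/6\neq 0$. You then evaluate \eqref{eq:fukui-A3} symbolically. Your remark that $a_{21}=0$ needs the transposed criterion is more careful than the paper, whose \eqref{eq:f4-second} divides by $a_{21}^{2}$.

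There is, however, a concrete error that makes Step~1 fail as written: your $f_{3}$ is wrong.

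The term $\tfrac12\sin\theta^{*}a_{02}^{2}v^{2}H_{3,u}$ is the product of the quadratic part $H_{3,u}$ of $Q_{u}$ with the quadratic part $-\tfrac12 a_{02}^{2}v^{2}$ of $W^{-1/2}$. It therefore has degree four and belongs to $f_{4}$.

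It also does not cancel on the kernel. We have $H_{3,u}(-a_{21},a_{30})=\tfrac{a_{30}}{2}(a_{30}a_{12}-a_{21}^{2})$, which is nonzero at every fold of Gauss that is not an inflection. With your formula, $f_{3}(du,dv)$ would not be a multiple of $\alpha_{2}$.

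The orthogonality you invoke is not the relevant one. In fact $\nabla H_{3,u}(0)=0$. What $(a_{30},a_{21})\perp(-a_{21},a_{30})$ expresses is $H_{3,uu}(-a_{21},a_{30})=0$, i.e.\ that the kernel is tangent to $\Sigma$.

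The correct cubic part is
\[
  f_{3}=-\sin\theta^{*}\,H_{4,u}-\cos\theta^{*}\,a_{02}\,v\,H_{3,v}.
\]
Inserting $a_{02}a_{30}\cos\theta^{*}=-(a_{30}a_{12}-a_{21}^{2})\sin\theta^{*}/a_{02}$ from \eqref{eq:cosRule} gives at once $f_{3}(-a_{21},a_{30})=-\sin\theta^{*}\alpha_{2}/a_{02}$, which is \eqref{eq:f3-second}. No Euler identity or cancellation is needed.

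In Step~2 the displaced term must reappear in
\[
  f_{4}=-\sin\theta^{*}H_{5,u}+\tfrac12\sin\theta^{*}a_{02}^{2}v^{2}H_{3,u}
  +\cos\theta^{*}\bigl[\tfrac38 a_{02}^{4}v^{4}-\tfrac12(H_{3,u}^{2}+H_{3,v}^{2})-a_{02}vH_{4,v}\bigr].
\]
This contains products $H_{3}\cdot H_{3}$, not $H_{3}\cdot H_{4}$ as you wrote.

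Finally, parametrising by $\tan\theta^{*}=a_{30}a_{02}^{2}/(a_{21}^{2}-a_{30}a_{12})$ in your computer-algebra step excludes the inflection case $a_{30}a_{12}=a_{21}^{2}$, where $\theta^{*}=\pi/2$. That case lies within the proposition. Keep \eqref{eq:cosRule} in its linear form, or use $\cot\theta^{*}$ as you did in Step~1.
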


\begin{proof}
We apply Theorem~\ref{thm:fukui} with~\eqref{eq:second-rank-one}.
A direct calculation from~\eqref{eq:I-explicit} and~\eqref{eq:cosRule}
gives
\begin{equation}\label{eq:f3-second}
    f_{3}(-a_{21},a_{30})
    \;=\;
    -\,\frac{\sin\theta^{*}}{a_{02}}\,\alpha_{2}.
\end{equation}
Theorem~\ref{thm:fukui}~(1) then gives statement~(1).

For statement~(2), assume $\alpha_{2}=0$.
We solve $\alpha_{2}=0$ for $a_{13}$:
\begin{equation}\label{eq:a13-solved}
\begin{split}
    a_{13} \;=\; -\frac{1}{a_{02}\,a_{30}^{\,3}}\,\Bigl\{&
        a_{02}\,a_{21}\bigl(2\,{H_{4}}_{,uu}(-a_{21},a_{30})
                        -a_{30}(a_{21}\,a_{31}-2\,a_{30}\,a_{22})\bigr)\\
        &+\,6\,(a_{30}a_{12}-a_{21}^{\,2})\,
              {H_{3}}_{,v}(-a_{21},a_{30})\Bigr\}.
\end{split}
\end{equation}
Applying the $A_{3}$-criterion~\eqref{eq:fukui-A3}
with~\eqref{eq:a13-solved} and~\eqref{eq:cosRule},
a direct calculation gives
\begin{equation}\label{eq:f4-second}
    f_{4}(-a_{21},a_{30})\;-\;
    \frac{1}{2\,s\,a_{21}^{\,2}}\,f_{3,01}(-a_{21},a_{30})^{2}
    \;=\;
    -\,\frac{\sin\theta^{*}}{3\,a_{02}^{\,2}\,a_{30}}\,\alpha_{3}.
\end{equation}
Theorem~\ref{thm:fukui}~(2) then gives statement~(2).
\end{proof}

\begin{rem}\label{rem:alpha-collapse}
The polynomials $\alpha_{2}$ and $\alpha_{3}$
in~\eqref{eq:alpha2}--\eqref{eq:alpha3} are defined uniformly on the
fold-of-Gauss stratum, with no case distinction.
Their explicit form nevertheless depends sensitively on the vanishing of
$a_{30}a_{12}-a_{21}^{\,2}$:
\begin{itemize}
    \item When $a_{30}a_{12}-a_{21}^{\,2}\neq 0$,
          $\theta^{*}$ takes any value in $(0,\pi)$ except $\pi/2$.
    \item When $a_{30}a_{12}-a_{21}^{\,2}=0$,
          equation~\eqref{eq:cosRule} reduces to
          $a_{30}\,a_{02}^{\,2}\cos\theta^{*}=0$, implying
          $\cos\theta^{*}=0$, i.e.\
          $\mathbf{v}_{\theta^{*}\!,0}=(1,0,0)$ is the asymptotic
          direction itself.
          In this special case, $\alpha_{2}$ collapses to
          $a_{02}\,{H_{4}}_{,u}(-a_{21},a_{30})$ and $\alpha_{3}$ to
          \(
            4\,a_{02}^{\,2}\bigl(-2\,a_{30}\,{H_{5}}_{,u}(-a_{21},a_{30})
            +{H_{4}}_{,uu}(-a_{21},a_{30})^{2}\bigr).
          \)
\end{itemize}
The geometric content of $\alpha_{2}$ and $\alpha_{3}$, and of
$a_{30}a_{12}-a_{21}^{\,2}$, can be expressed in terms of the geodesic curvature
$\kappa$ of the spherical curve $N\circ\gamma$ along $\Sigma$ at $p$:
the three quantities correspond, respectively, to (a nonzero scalar
multiple of) $\kappa(0)$, $\kappa'(0)$, and $\kappa''(0)$.
We develop this connection in~\S\ref{sec:proof}.
\end{rem}

\section{Proofs of the main theorems}
\label{sec:proof}

Proofs of both theorems rest on the geometric characterisations
of \S\ref{subsec:geometric-lemma}.

\subsection{Geodesic curvature of $N\circ\gamma$ in Monge coordinates}
\label{subsec:geometric-lemma}

Throughout this subsection we work in the Monge form~\eqref{eq:Monge}
under the setting~\eqref{eq:para-conv}: $k_{1}=0$ and
$k_{2}=a_{02}\neq 0$.
We recall from Remark~\ref{rem:Sigma-regular} that the parabolic set
$\Sigma=\{LN-M^{2}=0\}$ has $1$-jet $a_{02}\,(a_{30}u + a_{21}v)$ at
the origin, and is regular at $p$ if and only if
$(a_{30},a_{21})\neq(0,0)$.

\begin{lem}\label{lem:gamma-jet}
At a fold of Gauss $p$ ($a_{30}\neq 0$), $\Sigma$ admits a smooth
parametrisation $\gamma\colon I\to\mathbb{R}^{2}$, $\gamma(0)=0$, of the form
\begin{equation}\label{eq:gamma-form}
    \gamma(t) \;=\; \bigl(u(t),\; a_{30}\,t\bigr),
\end{equation}
whose $3$-jet is
\begin{equation}\label{eq:gamma-3jet}
    j^{3}\gamma(t) \;=\;
    \biggl(\,\sum_{i=1}^{3}\frac{c_{i}}{i!}\,t^{i},\; a_{30}\,t\,\biggr),
\end{equation}
where
\begin{align}
    c_{1} \;=\;& -a_{21},
    \label{eq:c1}\\
    c_{2} \;=\;&
    \frac{2}{a_{30}\,a_{02}}\,
    \Bigl[\,
        -a_{02}\,H_{4,uu}(-a_{21},a_{30})
        +(a_{30}a_{12}-a_{21}^{\,2})^{2}
    \,\Bigr],
    \label{eq:c2}\\
    c_{3} \;=\;& -\frac{6}{a_{30}^{\,2}\,a_{02}^{\,2}}\,\Bigl\{\,
    \bigl[
        a_{30}^{\,2}\,H_{5,uu}(-a_{21},a_{30})
        -H_{4,uuu}(-a_{21},a_{30})\,H_{4,uu}(-a_{21},a_{30})
    \bigr]\,a_{02}^{\,2}
    \nonumber\\
    & + a_{30}\,(a_{30}a_{12}-a_{21}^{\,2})\,
        \bigl[(a_{30}a_{12}-a_{21}^{\,2})\,H_{4,uuu}(-a_{21},a_{30})
            -6\,H_{4,u}(-a_{21},a_{30})\bigr]\,a_{02}
    \nonumber\\
    & + 6\,(a_{30}a_{12}-a_{21}^{\,2})^{2}\,H_{3}(-a_{21},a_{30})
    \,\Bigr\}.
    \label{eq:c3}
\end{align}
\end{lem}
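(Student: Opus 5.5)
The plan is to compute the defining function of $\Sigma$ explicitly in Monge coordinates and then solve for $u$ as a function of $v=a_{30}t$ by the implicit function theorem, order by order.

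\emph{Step 1: the defining function.} Write $Q=\tfrac12 a_{02}v^{2}+H_{3}+H_{4}+H_{5}+\cdots$ with $k_{1}=0$. Since $EG-F^{2}>0$ and $\langle g_{uu},\mathbf n\rangle=Q_{uu}/\sqrt{1+Q_u^2+Q_v^2}$ (and similarly for the other coefficients), the parabolic set is the zero set of the Hessian determinant
\[
    \Phi(u,v):=Q_{uu}Q_{vv}-Q_{uv}^{\,2}.
\]
Set $P_{k}$ for the degree-$k$ part of $\Phi$. Then $P_{1}=a_{02}H_{3,uu}$, which equals $a_{02}(a_{30}u+a_{21}v)$. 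This agrees with Remark~\ref{rem:Sigma-regular}. Next,
\[
    P_{2}=a_{02}H_{4,uu}+H_{3,uu}H_{3,vv}-H_{3,uv}^{2},
\]
and $P_{3}=a_{02}H_{5,uu}+H_{3,uu}H_{4,vv}+H_{4,uu}H_{3,vv}-2H_{3,uv}H_{4,uv}$.

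\emph{Step 2: solve order by order.} Because $a_{30}\neq0$, we have $\partial_u\Phi(0)=a_{02}a_{30}\neq0$. The implicit function theorem therefore gives $\Sigma=\{u=u(v)\}$ near the origin. Substituting $v=a_{30}t$ yields the form~\eqref{eq:gamma-form}.

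Write $u(t)=c_{1}t+\tfrac{c_2}{2}t^{2}+\tfrac{c_3}{6}t^{3}+O(t^4)$ and expand $\Phi(u(t),a_{30}t)\equiv0$ in powers of $t$:
\begin{itemize}
\item At order $t$, the condition is $a_{02}(a_{30}c_{1}+a_{21}a_{30})=0$, so $c_{1}=-a_{21}$. Hence the tangent direction is $(c_{1},a_{30})=(-a_{21},a_{30})$. This is exactly the kernel direction in~\eqref{eq:second-rank-one}, which explains why every term is evaluated at $(-a_{21},a_{30})$.
\item At order $t^{2}$, the condition is $a_{02}a_{30}\tfrac{c_2}{2}+P_{2}(-a_{21},a_{30})=0$. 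Evaluating $H_{3,uu},H_{3,vv},H_{3,uv}$ at $(-a_{21},a_{30})$ gives $H_{3,uu}=0$ there. Consequently $P_{2}$ reduces to $a_{02}H_{4,uu}-H_{3,uv}^{2}$, and $H_{3,uv}(-a_{21},a_{30})=-(a_{21}^2-a_{30}a_{12})$. This produces~\eqref{eq:c2}.
\item At order $t^{3}$, the condition is $a_{02}a_{30}\tfrac{c_3}{6}+\bigl[\nabla P_{2}\cdot(\tfrac{c_2}{2},0)\bigr]+P_{3}(-a_{21},a_{30})=0$, with $\nabla P_{2}$ evaluated at the tangent vector. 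Only the $u$-derivative enters, because only the $u$-component is corrected. We then substitute $c_{2}$ from the previous step.
\end{itemize}

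\emph{Step 3: put $c_3$ in the stated form.} Use the homogeneity identities, evaluated at $w=(-a_{21},a_{30})$:
\begin{itemize}
\item Euler's relation gives $-a_{21}H_{k,u}+a_{30}H_{k,v}=kH_{k}$.
\item $H_{3,uu}(w)=0$.
\item For the cubic, $H_{3,uuu}=a_{30}$, $H_{3,uuv}=a_{21}$, $H_{3,uvv}=a_{12}$, and $H_{3,vvv}=a_{03}$.
\end{itemize}
These let us rewrite the mixed terms $H_{4,uv}$, $H_{4,vv}$ and $H_{3,vv}$ in terms of $H_{4,u}$, $H_{4,uu}$, $H_{4,uuu}$ and $H_3(w)$.

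The main obstacle is this last reorganisation of the $t^3$ coefficient. The raw expansion contains $H_{4,uv}$ and $H_{4,vv}$, and $c_2$ enters multiplied by $\partial_uP_2(w)$, which contains $H_{4,uuu}$ and $H_{3,uv}H_{3,uuv}$-type products. Matching these to the compact grouping in~\eqref{eq:c3} requires eliminating the $v$-derivatives via the Euler identities. To check the bookkeeping, I would verify the final formula on a symbolic example, e.g.\ with all $a_{ij}$ for $i+j=4,5$ generic, using a computer algebra expansion.
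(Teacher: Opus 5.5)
Your route is the same as the paper's. The paper's proof invokes the implicit function theorem for $\Sigma=\{Q_{uu}Q_{vv}-Q_{uv}^{2}=0\}$ (using $a_{30}\neq0$) and then says that comparing the $t,t^{2},t^{3}$ coefficients gives $c_{1},c_{2},c_{3}$. Your $P_{1},P_{2},P_{3}$, your order-$t^{3}$ equation, and your derivations of $c_{1}=-a_{21}$ and of \eqref{eq:c2} are all correct.

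The step that fails is Step~3. The reorganisation cannot land on \eqref{eq:c3}, because \eqref{eq:c3} as printed is not the $t^{3}$ coefficient. Put $D:=a_{30}a_{12}-a_{21}^{2}$ and evaluate everything at $w=(-a_{21},a_{30})$. Your equation gives $c_{3}=-\frac{6}{a_{02}^{2}a_{30}^{2}}\bigl[a_{02}a_{30}P_{3}(w)-P_{2}(w)\,\partial_{u}P_{2}(w)\bigr]$, and then:
\begin{itemize}
\item the $H_{4,vv}$ term drops, since it multiplies $H_{3,uu}(w)=0$;
\item the two $H_{4,uu}H_{3,vv}$ terms cancel;
\item Euler's relation gives $a_{30}H_{4,uv}(w)-a_{21}H_{4,uu}(w)=3H_{4,u}(w)$;
\item Euler's relation applied to $H_{3},H_{3,u},H_{3,v}$ gives $a_{30}H_{3,vv}(w)-2a_{21}D=6H_{3}(w)/a_{30}$.
\end{itemize}
The result is
\[
c_{3}=-\frac{6}{a_{30}^{3}a_{02}^{2}}\Bigl\{\bigl[a_{30}^{2}H_{5,uu}-a_{30}H_{4,uuu}H_{4,uu}\bigr]a_{02}^{2}+a_{30}D\bigl[D\,H_{4,uuu}-6H_{4,u}\bigr]a_{02}+6D^{2}H_{3}\Bigr\}.
\]
This agrees with \eqref{eq:c3} only when $a_{30}=1$, or when every term other than $H_{4,uuu}H_{4,uu}$ vanishes.

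Two independent checks confirm the discrepancy.
\begin{itemize}
\item Homogeneity: replacing $Q$ by $\lambda Q$ leaves $\Sigma$ unchanged and divides the parameter $t$ of each point by $\lambda$, so $c_{3}$ must be homogeneous of degree $3$ in the $a_{ij}$. In the printed bracket, the term $H_{4,uuu}H_{4,uu}a_{02}^{2}$ has degree $7$, while the other four terms have degree $8$.
\item Example: take $Q=\tfrac12v^{2}+\tfrac13u^{3}+\tfrac12uv^{2}+\tfrac16v^{3}$, so $a_{02}=1$, $a_{30}=2$, $a_{21}=0$, $a_{12}=a_{03}=1$, $H_{4}=H_{5}=0$. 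Then $\Phi=2u(1+u+v)-v^{2}$, so $u=\tfrac12v^{2}-\tfrac12v^{3}+O(v^{4})$. With $v=2t$ this is $u=2t^{2}-4t^{3}+\cdots$, giving $c_{2}=4$ (agreeing with \eqref{eq:c2}) and $c_{3}=-24$. Formula \eqref{eq:c3} gives $-48$, while the formula above gives $-24$.
\end{itemize}

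The paper's proof asserts the $t^{3}$ matching without showing the computation, so it has the same defect. Your method is sound, but its last step should conclude with the corrected expression above, not claim agreement with \eqref{eq:c3}. The computer-algebra check you proposed would have caught this.
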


\begin{proof}
The parabolic set $\Sigma$ is defined by $LN-M^{2}=Q_{uu}Q_{vv}-Q_{uv}^{\,2}=0$.
By Remark~\ref{rem:Sigma-regular}, the $1$-jet of $LN-M^{2}$ at the origin is
$a_{02}\,(a_{30}u+a_{21}v)$.
Since $a_{30}\neq 0$, the implicit function theorem yields a smooth parametrisation
of $\Sigma$ near the origin with tangent direction $(-a_{21},a_{30})$.
Taking $v=a_{30}\,t$ as the parameter yields the form~\eqref{eq:gamma-form}.
The coefficients $c_{i}$ are determined by substituting
$j^{3}\gamma(t)$ into $LN-M^{2}=0$ and comparing coefficients of $t^{i}$:
the $t^{1}$-term gives $c_{1}=-a_{21}$, the $t^{2}$-term gives~\eqref{eq:c2},
and the $t^{3}$-term gives~\eqref{eq:c3}.
\end{proof}

\begin{rem}\label{rem:3jet-suffices}
For the computation of $\kappa(0)$, $\kappa'(0)$, and $\kappa''(0)$ in
Theorem~\ref{thm:kappa-formulas} below, the $3$-jet
in~\eqref{eq:gamma-3jet} is sufficient; no higher-order information on
$\gamma$ is needed.
Geometrically, this reflects the fact that the asymptotic direction
$\partial_{u}$ at $p$ lies in $\ker DN(0)$; see the proof of
Theorem~\ref{thm:kappa-formulas}.
\end{rem}

We now compute the geodesic curvature of $N\circ\gamma$.
For a smooth curve $g\colon I\to S^{2}$, the (signed) geodesic
curvature is
\begin{equation}\label{eq:kappa-formula}
    \kappa \;=\;
    \frac{\langle g'',\;g'\times g\rangle}{|g'|^{\,3}},
\end{equation}
where $g\in S^{2}$ is identified with the outward unit normal to $S^{2}$
at $g$, so that $g'\times g/|g'|$ is the geodesic unit normal of the
curve.

\begin{thm}[Geodesic curvature of $N\circ\gamma$]
\label{thm:kappa-formulas}
At a fold of Gauss $p$ under~\eqref{eq:para-conv}, the geodesic
curvature $\kappa$ of $N\circ\gamma$ at $p$ satisfies
\begin{equation}\label{eq:kappa0}
    \kappa(0) \;=\;
    \frac{a_{30}\,a_{12}-a_{21}^{\,2}}{a_{02}^{\,2}\,a_{30}}.
\end{equation}
Moreover:
\begin{enumerate}
    \item If $a_{30}a_{12}-a_{21}^{\,2}\neq 0$, then
    \begin{equation}\label{eq:kappa-prime}
        \kappa'(0) \;=\;
        \frac{6\,\alpha_{2}}{a_{02}^{\,3}\,a_{30}^{\,2}},
    \end{equation}
    where $\alpha_{2}$ is as in~\eqref{eq:alpha2}.
    \item If $a_{30}a_{12}-a_{21}^{\,2}\neq 0$ and $\alpha_{2}=0$, then
    \begin{equation}\label{eq:kappa-pprime}
        \kappa''(0) \;=\;
        \frac{\alpha_{3}}{a_{02}^{\,4}\,a_{30}^{\,3}},
    \end{equation}
    where $\alpha_{3}$ is as in~\eqref{eq:alpha3}.
    \item If $a_{30}a_{12}-a_{21}^{\,2}=0$, then
    \begin{equation}\label{eq:kappa-prime-collapsed}
        \kappa'(0) \;=\;
        -\,\frac{H_{4,u}(-a_{21},a_{30})}{a_{02}^{\,2}\,a_{30}^{\,2}}.
    \end{equation}
\end{enumerate}
\end{thm}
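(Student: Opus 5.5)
We compute directly with the curve $g(t)=N(\gamma(t))$, where $N$ is given by \eqref{eq:N} in Monge form and $\gamma$ is the parametrisation of Lemma~\ref{lem:gamma-jet}. The first step is to expand $N$ to the needed order. Write $N=(-Q_u,-Q_v,1)/\sqrt{1+Q_u^2+Q_v^2}$ and expand $Q_u$, $Q_v$ using \eqref{eq:Monge} with $k_1=0$, $k_2=a_{02}$. Then $Q_u=H_{3,u}+H_{4,u}+\dots$ and $Q_v=a_{02}v+H_{3,v}+H_{4,v}+\dots$.

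Next we substitute $\gamma(t)=(u(t),a_{30}t)$, with $u(t)=-a_{21}t+c_2t^2/2+c_3t^3/6+\cdots$, and expand $g(t)=g_1t+g_2t^2+g_3t^3+g_4t^4+O(t^5)$. Since $g(0)=(0,0,1)$, we have $g_1=(0,-a_{02}a_{30},0)$, which is nonzero, so $g$ is regular at $t=0$; this is the fold condition. Note that $Q_u$ starts at quadratic order along $\gamma$, because $\partial_u\in\ker DN(0)$. This is why, as in Remark~\ref{rem:3jet-suffices}, the $3$-jet of $\gamma$ suffices for $g$ up to the order needed for $\kappa''(0)$.

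We then evaluate \eqref{eq:kappa-formula} at $t=0$. Here $g'(0)=g_1$, $g''(0)=2g_2$, and the first component of $g_2$ is $-\tfrac12 H_{3,uu}\cdot$ (quadratic data)$=-\tfrac12(a_{30}\cdot a_{21}^2-2a_{21}\cdot a_{21}a_{30}+a_{12}a_{30}^2)$. More carefully, the $t^2$-coefficient of $-Q_u(\gamma(t))$ is $-H_{3,u}(-a_{21},a_{30})=-\tfrac12 a_{30}(a_{30}a_{12}-a_{21}^2)$. Also $g_1\times g(0)$ is a multiple of $e_1$. Hence $\kappa(0)$ is this coefficient times $2a_{02}a_{30}/|a_{02}a_{30}|^3$, which gives \eqref{eq:kappa0} after tracking signs.

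For $\kappa'(0)$ and $\kappa''(0)$ we differentiate $\kappa(t)=\langle g'',g'\times g\rangle/|g'|^3$ once and twice at $0$. The numerator and denominator are polynomial in the jets $g_1,\dots,g_4$. The terms $g_3$ and $g_4$ involve $c_2,c_3$ from \eqref{eq:c2}--\eqref{eq:c3}, together with $H_4$ and $H_5$ evaluated at $(-a_{21},a_{30})$ and their derivatives; these enter through the Taylor expansion of $H_k(u(t),a_{30}t)$ around the direction $(-a_{21},a_{30})$. The result of collecting terms is then compared with \eqref{eq:alpha2}. Homogeneity (Euler's identity) is used to rewrite expressions like $-a_{21}H_{4,uu}+a_{30}H_{4,uv}$ as $3H_{4,u}$ at $(-a_{21},a_{30})$.

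For part (2) we impose $\alpha_2=0$, which is equivalent to the substitution \eqref{eq:a13-solved}, before simplifying, so that the $\kappa'$-type cross terms drop out of $\kappa''(0)$. We then match the remainder with $\alpha_3/(a_{02}^4a_{30}^3)$. Part (3) is the specialisation $a_{30}a_{12}=a_{21}^2$: then $\kappa(0)=0$, $g_2$ has no $e_1$-component, and only the $t^3$-coefficient of $-Q_u$, namely $-H_{4,u}(-a_{21},a_{30})$ plus terms proportional to $a_{30}a_{12}-a_{21}^2$, survives in the derivative of the numerator. This yields \eqref{eq:kappa-prime-collapsed}.

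The main obstacle is the $\kappa''(0)$ computation. It requires the full $4$-jet of $g$, including the normalisation factor $(1+Q_u^2+Q_v^2)^{-1/2}$ and its interaction with $Q_v\approx a_{02}a_{30}t$. It also requires the derivative of $|g'|^{-3}$ and the reduction modulo $\alpha_2=0$. I would carry this out symbolically, organising terms by powers of $a_{02}$ to match the three blocks of \eqref{eq:alpha3}.
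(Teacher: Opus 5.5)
Your route is the paper's route: expand $N\circ\gamma$ from the jet of $\gamma$, differentiate $\langle g'',g'\times g\rangle/|g'|^{3}$, and eliminate $a_{13}$ via $\alpha_{2}=0$. The problem is that the decisive steps (``gives \eqref{eq:kappa0} after tracking signs'', ``this yields \eqref{eq:kappa-prime-collapsed}'', ``match the remainder with $\alpha_{3}/(a_{02}^{4}a_{30}^{3})$'') are asserted rather than carried out, and when carried out they do not give the stated formulas.

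Put $e=(-a_{21},a_{30})$ and $P=a_{30}a_{12}-a_{21}^{2}$. Then exactly $Q_u\circ\gamma=\tfrac12 a_{30}P\,t^{2}+H_{4,u}(e)\,t^{3}+O(t^{4})$. The $c_{2}$-contribution at order $t^{3}$ carries the factor $H_{3,uu}(e)=0$, so there are no extra terms proportional to $P$. Also $Q_v\circ\gamma=a_{02}a_{30}t+H_{3,v}(e)t^{2}+O(t^{3})$ and $|g'(0)|=|a_{02}a_{30}|$. Hence
$\kappa(0)=\operatorname{sgn}(a_{02}a_{30})\,P/(a_{02}^{2}a_{30})$ and $\kappa'(0)=6a_{30}\alpha_{2}/|a_{02}a_{30}|^{3}$. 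So \eqref{eq:kappa0} and \eqref{eq:kappa-prime} hold only up to the sign $\operatorname{sgn}(a_{02}a_{30})$.

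In case (3), your own observation that only $\langle g'''(0),g'(0)\times g(0)\rangle$ survives gives $\kappa'(0)=6a_{02}a_{30}H_{4,u}(e)/|a_{02}a_{30}|^{3}$. This is just (1) with $\alpha_{2}=a_{02}H_{4,u}(e)$, and it is off from \eqref{eq:kappa-prime-collapsed} by a factor $-6$. For example, $Q=\tfrac12v^{2}+\tfrac16u^{3}+uv^{3}$ gives $\kappa(t)=6t+O(t^{3})$, whereas \eqref{eq:kappa-prime-collapsed} predicts $\kappa'(0)=-1$. The paper's own step ``statement (1) gives (3)'' has exactly this inconsistency. For $\kappa(0)$, (1) and (3), only the vanishing conditions can be proved, not the displayed constants.

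The genuine obstruction is part (2): it is false as stated, so your matching step cannot succeed, and neither can the paper's. Take $Q=\tfrac12v^{2}+\tfrac16u^{3}+\tfrac p2uv^{2}$ with $p\neq0$, i.e.\ $a_{02}=a_{30}=1$, $a_{12}=p$, all other coefficients zero. Then $P=p$, and $\alpha_{2}=0$ because $H_{4}=0$ and $H_{3,v}(e)=0$. Formula \eqref{eq:alpha3} gives $\alpha_{3}=p(4p^{3}-1)$.

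On the other hand, $\Sigma=\{u+pu^{2}=p^{2}v^{2}\}$ and $\gamma(t)=(p^{2}t^{2}+O(t^{4}),t)$. Along it, $Q_u\circ\gamma=\tfrac p2t^{2}+\tfrac{p^{4}}{2}t^{4}+O(t^{6})$ and $Q_v\circ\gamma=t+p^{3}t^{3}+O(t^{5})$. Expanding $\langle g'',g'\times g\rangle$ and $|g'|^{-3}$ gives $\kappa(t)=p+\bigl(\tfrac32p-\tfrac32p^{3}-6p^{4}\bigr)t^{2}+O(t^{3})$, so $\kappa''(0)=3p(1-p^{2}-4p^{3})$.

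At $p=1$ this is $-12$, against the value $3$ predicted by \eqref{eq:kappa-pprime}; I checked this case independently by expanding $g$ itself. At $p=4^{-1/3}$ one has $\alpha_{3}=0$ while $\kappa''(0)=-3p^{3}\neq0$. So on $\{\alpha_{2}=0\}$ the two quantities do not even have the same zero set. Consequently the criterion ``first-order vertex $\Leftrightarrow\alpha_{3}\neq0$'' in Corollary~\ref{cor:geom-alg} does not follow.

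Carrying out your plan correctly would produce a different polynomial in place of $\alpha_{3}$. As a proof of the statement as written, the proposal fails at exactly this step.
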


\begin{proof}
Let $g=N\circ\gamma$ with $\gamma$ as in Lemma~\ref{lem:gamma-jet}.
Since $\kappa(t)$ is determined by $g(t)$, $g'(t)$, and $g''(t)$, the
values $\kappa(0)$, $\kappa'(0)$, and $\kappa''(0)$ are determined by
$j^{4}g$.

Under the Monge form~\eqref{eq:Monge} with $k_{1}=0$ and $a_{11}=0$,
a direct computation gives
\begin{equation}\label{eq:DN-kernel}
    N_{u}(0) \;=\; \bigl(-Q_{uu}(0),\,-Q_{uv}(0),\,0\bigr) \;=\; (0,0,0),
\end{equation}
so that the asymptotic direction $\partial_{u}$ at $p$ lies in
$\ker DN(0)$.
The Taylor expansion of $N$ at the origin therefore takes the form
\begin{equation}\label{eq:N-Taylor}
    N(u,v) \;=\; N(0) \;+\; N_{v}(0)\,v
    \;+\; \sum_{\substack{i+j\geq 2\\ i\geq 1}}
        \frac{\partial_{u}^{i}\partial_{v}^{j}N(0)}{i!\,j!}\,u^{i}v^{j}
    \;+\; \sum_{j\geq 2}\frac{\partial_{v}^{j}N(0)}{j!}\,v^{j},
\end{equation}
without a linear term in $u$.
Substituting $\gamma(t)=(u(t),a_{30}t)$ from~\eqref{eq:gamma-form},
each monomial $u(t)^{i}(a_{30}t)^{j}$ with $i\geq 1$ and $i+j\geq 2$
is of order at least $t^{i+j}$, so its contribution to the
$t^{4}$-coefficient of $g(t)$ involves $u(t)$ only up to order
$t^{\,5-i-j}\leq t^{3}$.
The pure $v$-terms do not involve $u(t)$.
Hence $j^{4}g$ is determined by $j^{3}\gamma$ together with the
partial derivatives $\partial_{u}^{i}\partial_{v}^{j}N(0)$ for
$i+j\leq 4$, equivalently the polynomials $H_{3}$, $H_{4}$, $H_{5}$ in
the Monge expansion of $Q$.

Substituting $j^{3}\gamma$ from~\eqref{eq:gamma-3jet} into $g=N\circ\gamma$
and expanding in powers of $t$ yields $j^{4}g$.
Applying~\eqref{eq:kappa-formula} and differentiating with respect to
$t$ gives~\eqref{eq:kappa0} and, when $a_{30}a_{12}-a_{21}^{\,2}\neq 0$,
also~\eqref{eq:kappa-prime}.

For~(2), the hypothesis $\alpha_{2}=0$ is solved for $a_{13}$
via~\eqref{eq:a13-solved}; substituting into $j^{4}g$ and differentiating
twice yields~\eqref{eq:kappa-pprime}.

For~(3), under $a_{12}=a_{21}^{\,2}/a_{30}$ the factor
$a_{30}a_{12}-a_{21}^{\,2}$ in~\eqref{eq:alpha2} vanishes, so
$\alpha_{2}$ reduces to $a_{02}H_{4,u}(-a_{21},a_{30})$, and statement~(1)
gives~\eqref{eq:kappa-prime-collapsed}.
\end{proof}

\begin{rem}
The results above concerning the inflection of Gauss appear,
explicitly or implicitly, in \cite{BGM1982, IRFT, KvD1993}.
\end{rem}

\begin{cor}
\label{cor:geom-alg}
Let $p$ be a parabolic point of $\mathcal{M}$ that is not a flat
umbilic, in Monge form under the setting~\eqref{eq:para-conv}.
The geometric strata defining the four cases of
Theorem~\ref{thm:parabolic} are equivalent to the following
algebraic conditions on the Monge coefficients:
\begin{enumerate}
    \item $p$ is a fold of Gauss but neither an inflection nor a
          vertex of Gauss
          $\;\Longleftrightarrow\;$
          $a_{30}\neq 0,\;\;a_{30}a_{12}-a_{21}^{\,2}\neq 0,\;\;
                 \alpha_{2}\neq 0.$
    \item $p$ is a first-order inflection of Gauss
          $\;\Longleftrightarrow\;$
          $a_{30}\neq 0,\;\;a_{30}a_{12}-a_{21}^{\,2}=0,\;\;
                 H_{4,u}(-a_{21},a_{30})\neq 0.$
    \item $p$ is a first-order vertex of Gauss
          $\;\Longleftrightarrow\;$
          $a_{30}\neq 0,\;\;a_{30}a_{12}-a_{21}^{\,2}\neq 0,\;\;
                 \alpha_{2}=0,\;\;\alpha_{3}\neq 0.$
    \item $p$ is a cusp of Gauss at which $\Sigma$ is regular
          $\;\Longleftrightarrow\;$
          $a_{30}=0,\;\;a_{21}\neq 0,\;\;
                 a_{02}\,a_{40}-3\,a_{21}^{\,2}\neq 0.$
\end{enumerate}
\end{cor}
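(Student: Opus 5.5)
We prove the four equivalences case by case, translating each geometric definition from Section~\ref{sec:intro} into Monge coefficients through Remark~\ref{rem:Sigma-regular} and Theorem~\ref{thm:kappa-formulas}. First, a fold of Gauss means $N|_{\Sigma}$ is regular at $p$. When $a_{30}\neq 0$, Lemma~\ref{lem:gamma-jet} gives $\gamma'(0)=(-a_{21},a_{30})$. By \eqref{eq:DN-kernel}, $DN(0)$ kills $\partial_u$ and sends $\partial_v$ to $N_v(0)=(0,-a_{02},0)\neq 0$, so $(N\circ\gamma)'(0)=a_{30}N_v(0)\neq 0$. Conversely, if $a_{30}=0$ and $a_{21}\neq 0$, then $\Sigma$ is regular with tangent $\partial_u\in\ker DN(0)$, so $N|_\Sigma$ is singular. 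Hence, for $\Sigma$ regular, fold of Gauss $\Leftrightarrow a_{30}\neq 0$. This is consistent with the height-function description: $h_{N(p)}$ has $3$-jet $\tfrac12 a_{02}v^2+H_3$, which is $A_2$ iff $a_{30}\neq 0$.

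On the fold stratum, \eqref{eq:kappa0} shows $\kappa(0)=0\Leftrightarrow a_{30}a_{12}-a_{21}^2=0$, since $a_{02},a_{30}\neq0$. Then the cases follow.
\begin{itemize}
\item Case~1: with $\kappa(0)\neq0$, \eqref{eq:kappa-prime} gives $\kappa'(0)\neq0\Leftrightarrow\alpha_2\neq0$.
\item Case~2: with $\kappa(0)=0$, \eqref{eq:kappa-prime-collapsed} gives $\kappa'(0)\neq0\Leftrightarrow H_{4,u}(-a_{21},a_{30})\neq0$.
\item Case~3: a vertex requires $\kappa(0)\neq0$ and $\kappa'(0)=0$, i.e.\ $\alpha_2=0$. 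Under that hypothesis, \eqref{eq:kappa-pprime} gives $\kappa''(0)\neq0\Leftrightarrow\alpha_3\neq0$.
\end{itemize}
One point needs care: $\kappa$ is defined with respect to arc length along $\Sigma$, whereas Theorem~\ref{thm:kappa-formulas} uses the parameter $t$ with $v=a_{30}t$. Geodesic curvature is reparametrisation-invariant up to sign, but its derivatives pick up the factor $dt/ds$, which is nonzero. The vanishing of $\kappa$, of $\kappa'$ (given $\kappa=0$ or not), and of $\kappa''$ given $\kappa'=0$, is unaffected. This holds because $\tfrac{d^2}{ds^2}\kappa=\kappa_{tt}(t')^2+\kappa_t t''$, and the last term vanishes when $\kappa_t=0$. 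I will state this reparametrisation remark explicitly.

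Case~4 is the main obstacle, since it requires linking the cusp of Gauss, defined via $h_{N(p)}$ being $A_3$, to $a_{02}a_{40}-3a_{21}^2\neq0$. Here $h_{N(p)}=Q$. With $a_{30}=0$ we have $Q=\tfrac12 a_{02}v^2+\tfrac12 a_{21}u^2v+\dots+\tfrac1{24}a_{40}u^4+\dots$. I will apply Theorem~\ref{thm:fukui} with $(du,dv)=(1,0)$ and $s=a_{02}$. First, $f_3(1,0)=a_{30}/6=0$. Next, $f_{3,01}(1,0)=a_{21}/2$ and $f_4(1,0)=a_{40}/24$. The $A_3$ quantity is therefore $a_{40}/24-a_{21}^2/(8a_{02})=(a_{02}a_{40}-3a_{21}^2)/(24a_{02})$. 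Thus a degenerate parabolic point is a cusp of Gauss iff this is nonzero. Combined with $a_{21}\neq0$, which is $\Sigma$-regularity by Remark~\ref{rem:Sigma-regular}, this gives Case~4.

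Finally, the conditions are stated in a fixed Monge frame. I will note that the choice $\varphi=0$ versus $\pi$ and orientation changes only flip signs of the $a_{ij}$ and of $\kappa$, so the vanishing conditions are frame-independent.
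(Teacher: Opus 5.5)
Your proposal is correct and follows the paper's argument: cases 1--3 are read off from Theorem~\ref{thm:kappa-formulas}, and case 4 comes from the cusp-of-Gauss characterisation together with Remark~\ref{rem:Sigma-regular}. The paper simply cites \cite[Theorem~6.2]{IRFT} for the cusp condition, whereas you rederive $a_{02}a_{40}-3a_{21}^{2}\neq0$ from Fukui's $A_3$ test applied to $h_{N(p)}=Q$. You also make explicit two points the paper leaves tacit: that a fold of Gauss means exactly $a_{30}\neq0$, and that replacing the parameter $t$ (with $v=a_{30}t$) by arc length preserves the vanishing or non-vanishing of $\kappa$, of $\kappa'$, and of $\kappa''$ when $\kappa'=0$.
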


\begin{proof}
For cases~1--3, the geometric conditions in
Theorem~\ref{thm:parabolic} are formulated as vanishing/non-vanishing
of $\kappa(0)$, $\kappa'(0)$, $\kappa''(0)$.
By Theorem~\ref{thm:kappa-formulas}, these translate directly into
the stated algebraic conditions:
case~(1) (neither inflection nor vertex) corresponds to
$\kappa(0)\neq 0$ and $\kappa'(0)\neq 0$;
case~(2) (first-order inflection) to $\kappa(0)=0$ and $\kappa'(0)\neq 0$;
case~(3) (first-order vertex) to $\kappa(0)\neq 0$, $\kappa'(0)=0$,
$\kappa''(0)\neq 0$.

For case~(4), by~\cite[Theorem~6.2]{IRFT}, the conditions $a_{30}=0$
and $a_{02}\,a_{40}-3\,a_{21}^{\,2}\neq 0$ characterise the
cusp of Gauss in Monge coordinates,
while $a_{21}\neq 0$ is equivalent to $\Sigma$ being regular at $p$
(Remark~\ref{rem:Sigma-regular}).
\end{proof}

\begin{rem}
\label{rem:literature-parabolic}
The stratification of parabolic points has been examined through several
frameworks in singularity theory---height functions, distance-squared
functions, the projection, the reflection map, the principal map, ridges,
sub-parabolic points, and binary differential equations, among others
\cite{BGM1982, BG1992, Bruce1984, BT2019, FH2012, FHN2017, FuHi2023, HKS2024,
Honda2021, IRFT, Kabata2016, Pla1986, PST2024, Por2001}.
Within each of these, the cusp of Gauss admits a well-known characterisation.
The inflection of Gauss has so far been identified only
with the goose singularity of the projection
\cite{BGM1982, Bruce1984, HKS2024, IRFT, Pla1986}, with a cusp
singularity of the principal map \cite{BT2019}, and with a bifurcation point at which the $A_1^{\pm}$-type classification
of the slant function undergoes a qualitative change \cite{Koen1990, KvD1993}.
(Koenderink and van Doorn call this point a \emph{gutterpoint},
following the terminology of \cite{KT1980}.)
At a fold of Gauss, the slant function admits a non-normal
$A_{\ge 2}$-direction in addition to $N(p)$;
Theorem~\ref{thm:parabolic} refines this characterisation by showing
that the inflection of Gauss is precisely the locus where this
non-normal $A_{\ge 2}$-direction coincides with the asymptotic direction.
The vertex of Gauss has not been previously identified in the literature,
and arose from the present study of the slant function.
In particular, ridges, sub-parabolic points, and binary differential
equations do not detect these two strata---the inflection and vertex of Gauss.
\end{rem}

\subsection{Proof of Theorem~\ref{thm:stratification}}
\label{subsec:proof-thm-strat}

\begin{proof}[Proof of Theorem~\ref{thm:stratification}]
The five cases of the theorem follow by combining
Proposition~\ref{prop:2jet-stratification} (which describes the
location and number of $A_{\ge 2}$-directions on the asymptotic normal
section) with Proposition~\ref{prop:type-normal} (which identifies
the precise type at the normal direction in cases~(2) and~(3)).
The detailed correspondence between the cases of the theorem and the
sub-cases of Proposition~\ref{prop:2jet-stratification} is stated
in Remark~\ref{rem:thm11-2jet-part}, and the precise types
$A_{3}$ (case~(2)) and $A_{5}$ (case~(3)) are identified by
Proposition~\ref{prop:type-normal}.
\end{proof}

\subsection{Proof of Theorem~\ref{thm:parabolic}}
\label{subsec:proof-thm-para}

\begin{proof}[Proof of Theorem~\ref{thm:parabolic}]
We address the four cases in turn, applying Corollary~\ref{cor:geom-alg}.
Throughout, $p$ is a non-flat parabolic point in Monge form under the
setting~\eqref{eq:para-conv}.

\medskip\noindent
\textbf{Case~(1): $p$ is a fold of Gauss but neither an inflection nor a vertex of Gauss.}
By Corollary~\ref{cor:geom-alg}~(1), $a_{30}\neq 0$,
$a_{30}a_{12}-a_{21}^{\,2}\neq 0$, and $\alpha_{2}\neq 0$.
Proposition~\ref{prop:type-normal}~(1) gives that the normal direction
$\mathbf{v}=N(p)$ is of type $A_{3}$.
Proposition~\ref{prop:type-second}~(1), with $\alpha_{2}\neq 0$, gives
that the second $A_{\ge 2}$-direction $\mathbf{v}_{\theta^{*}\!,0}$
is of type $A_{2}$.
Since $a_{30}a_{12}-a_{21}^{\,2}\neq 0$, equation~\eqref{eq:cosRule}
yields $\theta^{*}\neq\pi/2$
(cf.\ Remark~\ref{rem:alpha-collapse}), so
$\mathbf{v}_{\theta^{*}\!,0}$ is distinct from the asymptotic
direction.

\medskip\noindent
\textbf{Case~(2): $p$ is a first-order inflection of Gauss.}
By Corollary~\ref{cor:geom-alg}~(2), $a_{30}\neq 0$,
$a_{30}a_{12}-a_{21}^{\,2}=0$, and
$H_{4,u}(-a_{21},a_{30})\neq 0$.
Proposition~\ref{prop:type-normal}~(1) gives that the normal direction
is of type $A_{3}$.
Under $a_{30}a_{12}-a_{21}^{\,2}=0$, equation~\eqref{eq:cosRule}
reduces to $a_{30}\,a_{02}^{\,2}\cos\theta^{*}=0$, giving
$\theta^{*}=\pi/2$;
hence $\mathbf{v}_{\theta^{*}\!,0}=(1,0,0)$ is the asymptotic
direction itself
(Remark~\ref{rem:alpha-collapse}).
The collapsed form of $\alpha_{2}$ is
$a_{02}\,H_{4,u}(-a_{21},a_{30})$, which is nonzero by hypothesis.
Proposition~\ref{prop:type-second}~(1) then gives that the asymptotic
direction is of type $A_{2}$.

\medskip\noindent
\textbf{Case~(3): $p$ is a first-order vertex of Gauss.}
By Corollary~\ref{cor:geom-alg}~(3), $a_{30}\neq 0$,
$a_{30}a_{12}-a_{21}^{\,2}\neq 0$, $\alpha_{2}=0$, and
$\alpha_{3}\neq 0$.
Proposition~\ref{prop:type-normal}~(1) gives that the normal direction
is of type $A_{3}$.
Proposition~\ref{prop:type-second}~(2), with $\alpha_{2}=0$ and
$\alpha_{3}\neq 0$, gives that the second $A_{\ge 2}$-direction
$\mathbf{v}_{\theta^{*}\!,0}$ is of type $A_{3}$.

\medskip\noindent
\textbf{Case~(4): $p$ is a cusp of Gauss at which $\Sigma$ is regular.}
By Corollary~\ref{cor:geom-alg}~(4), $a_{30}=0$, $a_{21}\neq 0$, and
$a_{02}\,a_{40}-3\,a_{21}^{\,2}\neq 0$.
By Proposition~\ref{prop:2jet-stratification}~(2b-1), the unique
$A_{\ge 2}$-direction on the asymptotic normal section is the normal
direction $N(p)$.
Proposition~\ref{prop:type-normal}~(2), with
$a_{02}\,a_{40}-3\,a_{21}^{\,2}\neq 0$, gives that the normal
direction is of type $A_{5}$.
\end{proof}

\section*{Acknowledgements}

The authors are grateful to K. Anjyo for introducing them to this topic, and to T. Fukui and A. Hiramatsu for helpful discussions.
This work is partially supported by JSPS KAKENHI Grant Numbers 25H01485
and 25K00208.


\end{document}